%%%%%%%%%%%%%%%%%%%%%%%%%%%%%%%%%%%%%%%%%
\documentclass[12pt,reqno]{amsart}

\usepackage{color}

%--- Packages ---

\usepackage{amsmath,amsfonts,amsthm,amssymb,amsxtra}

\setlength{\voffset}{-.7truein}
\setlength{\textheight}{8.8truein}
\setlength{\textwidth}{6.05truein}
\setlength{\hoffset}{-.7truein}

%--- Theorem structure ---

\newtheorem{theorem}{Theorem}%[section]
\newtheorem{proposition}[theorem]{Proposition}
\newtheorem{lemma}[theorem]{Lemma}
\newtheorem{corollary}[theorem]{Corollary}

\theoremstyle{definition}

\theoremstyle{remark}

\newtheorem{remark}[theorem]{Remark}

%--- Commands and math operators ---

\renewcommand{\epsilon}{\varepsilon}

\renewcommand{\phi}{\varphi}
\newcommand{\R}{\mathbb{R}}

\newcommand\eps\varepsilon
\newcommand\cd{\mathcal{C}_{\lambda,d}}

\newcommand{\rd}{\mathrm{d}}
\newcommand{\dx}{\rd x}
\newcommand{\dy}{\rd y}
\newcommand{\dt}{\rd t}
\newcommand{\cF}{{\mathcal F}}

\title{Stability estimate for the Lane-Emden inequality}

\author[E. Carlen]{Eric Carlen}
\address{Department of Mathematics, Rutgers University, 110 Frelinghuysen Rd., Piscataway NJ 08854-8019, USA}
\email{carlen@math.rutgers.edu}

\author[M. Lewin]{Mathieu Lewin}
\address{CNRS \& CEREMADE, Universit\'e Paris-Dauphine, PSL University, 75016 Paris, France}
\email{mathieu.lewin@math.cnrs.fr}

\author[E.H. Lieb]{Elliott H. Lieb}
\address{Departments of Mathematics and Physics, Jadwin Hall, Princeton University, Washington Rd., Princeton, NJ 08544, USA}
\email{lieb@princeton.edu}

\author[R. Seiringer]{Robert Seiringer}
\address{IST Austria (Institute of Science and Technology Austria), Am Campus 1, 3400 Klosterneuburg, Austria}
\email{robert.seiringer@ist.ac.at}

\date{\today.}

%%%%%%%%%%%%%%%%%%%%%%%%%%%%%%%%%%%%%%%%%%%%%%%%%%%%%%%%%%%%%%%%%%%%%%%%%%
%%%%%%%%%%%%%%%%%%%%%%%%%%%%%%%%%%%%%%%%%%%%%%%%%%%%%%%%%%%%%%%%%%%%%%%%%%

\begin{document}

\begin{abstract}
The  Lane-Emden inequality controls $\iint_{\R^{2d}}\rho(x)\rho(y)|x-y|^{-\lambda}\,\dx\,\dy$ in terms of the $L^1$ and $L^p$ norms of $\rho$. We provide a remainder estimate for this inequality in terms of a suitable distance of $\rho$ to the manifold of optimizers.

\bigskip

\noindent \sl \copyright~2024 by the authors. This paper may be reproduced, in its entirety, for non-commercial purposes.
\end{abstract}

\maketitle

%%%%%%%%%%%%%%%%%%%%%%%%%%%%%%%%%%%%%%%%%%%%%%%%%%%%%%%%%%%%%%%%%%%%%%%%%%
%%%%%%%%%%%%%%%%%%%%%%%%%%%%%%%%%%%%%%%%%%%%%%%%%%%%%%%%%%%%%%%%%%%%%%%%%%
\section{Introduction and main result}

\subsection{Lane-Emden inequality}
We consider the  inequality
\begin{equation}
\boxed{\iint_{\R^{2d}} \frac{\rho(x)\rho(y)}{|x-y|^\lambda} \dx\, \dy \leq a(\lambda,p,d) \left(\int_{\R^d}\rho(x)\,\dx\right)^{2-\frac {p\lambda}{d(p-1)}} \left(\int_{\R^d}\rho(x)^p\,\dx\right)^{\frac {\lambda}{d(p-1)}}}
 \label{eq:Lane-Emden}
\end{equation}
for every $0\leq \rho\in (L^1\cap L^p)(\R^d)$, whose validity follows from the Hölder and Hardy-Littlewood-Sobolev (HLS) inequalities~\cite{LieLos-01}. The conditions on $\lambda$ and $p$ are
\begin{equation}
0<\lambda<d,\qquad p_c:=\frac{2}{2-\lambda/d}<p\leq\infty.
\label{eq:cond_lambda_p}
\end{equation}
When $p=+\infty$ it is understood that the right side of~\eqref{eq:Lane-Emden} reads $\|\rho\|_1^{2-\lambda/d}\|\rho\|_\infty^{\lambda/d}$.
We call
\begin{equation}\label{LEp}
a = a(\lambda,p,d) := \sup \left\{\iint_{\R^{2d}} \frac{\rho(x)\rho(y)}{|x-y|^\lambda} \dx\, \dy \, : \, \rho\geq0,\ \|\rho\|_1 = 1 = \|\rho\|_p \right\}
\end{equation}
the best constant in~\eqref{eq:Lane-Emden}. In the Coulomb case $d=3$, $p=4/3$, $\lambda=1$  the optimizer of \eqref{eq:Lane-Emden} solves the Lane-Emden equation~\cite{Lane-70,Emden-07,Chandrasekhar-39}. For this reason we call~\eqref{eq:Lane-Emden} the \emph{Lane-Emden inequality}. %The same inequality has appeared under different names in the literature, such as the \emph{singular Keller-Segel model}~\cite{CalCarHof-21}, the \emph{fractional plasma problem}~\cite{ChaGonHuaMaiVol-20} and \emph{aggregation‐diffusion steady states}~\cite{DelYanYao-20}.

By rearrangement, it is well known that optimizers of~\eqref{eq:Lane-Emden} are radial-decreasing after suitable translation~\cite{Lieb-77,LieOxf-80}. Uniqueness (up to the trivial symmetries) was investigated in \cite{LieOxf-80,CarHofMaiVol-18,ChaGonHuaMaiVol-20,CalCarHof-21}. The Lane-Emden case $d=3$, $\lambda=1$ and $p=4/3$ is contained in \cite[App.~A]{LieOxf-80} (see also~\cite[App.~A]{LieYau-87}). Using ODE techniques, that proof extends to $d>3$ for $\lambda = d-2$. The case $d-2 < \lambda < d$ and  $p\leq 2$ is contained in  \cite{ChaGonHuaMaiVol-20}. Moreover, uniqueness is shown in~\cite{CarHofMaiVol-18} for $d=1$, $0<\lambda<1$ and $p>1+\lambda$ and in~\cite{CalCarHof-21} for $d-2\leq \lambda < d$, $d\geq 2$ and $p\geq 1+\lambda/d$. The case $p\geq 2$ is also investigated in \cite{DelYanYao-20}.

In this paper we provide a refined inequality containing a remainder term, in the spirit of the refinement of Sobolev's inequality~\cite{BreLie-85,BiaEgn-91,DolEstFigFraLos-23_ppt} and several other recent results in the same direction. Our main result is the following.

\begin{theorem}[Stability for the  Lane-Emden inequality]\label{thm:main}
Assume that
\begin{equation}
p_c=\frac{2}{2-\lambda/d}< p\leq 2, \qquad \lambda > 0,\qquad d-2\leq \lambda < d,
\label{eq:assumption_p_lambda}
\end{equation}
and let $a=a(\lambda,p,d)$ denote the optimal Lane-Emden constant in \eqref{LEp}. 
Then there exists a constant $c = c(\lambda,p,d) > 0$ such that
\begin{equation}\label{main:eq}
\int_{\R^d}\rho(x)^p\,\dx \geq \left( a^{-1} \iint_{\R^{2d}} \frac{\rho(x)\rho(y)}{|x-y|^\lambda} \dx\, \dy \right)^\frac{d(p-1)} {\lambda}+ c  \inf_\ell  \int_{\R^d}\left(\rho(x)^{ \frac p2} - \ell(x)^{ \frac p2}\right)^2\dx
\end{equation}
for all non-negative $\rho \in L^p(\R^d)$ with $\int_{\R^d}\rho= 1$, where the infimum is over all optimizers of the Lane-Emden inequality~\eqref{eq:Lane-Emden} with mass $\int_{\R^d}\ell= 1$.
\end{theorem}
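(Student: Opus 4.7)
The plan is to follow the classical Bianchi-Egnell strategy: establish stability locally near the optimizer manifold via a spectral gap for the Hessian of the Lane-Emden deficit, then globalize through a contradiction-compactness argument. Introduce the deficit and Riesz energy
$$\delta[\rho]:=\int_{\R^d}\rho^p\,\dx-\bigl(a^{-1}D[\rho]\bigr)^{d(p-1)/\lambda},\qquad D[\rho]:=\iint_{\R^{2d}}\frac{\rho(x)\rho(y)}{|x-y|^\lambda}\,\dx\,\dy.$$
Then~\eqref{eq:Lane-Emden} is equivalent to $\delta[\rho]\ge 0$ for $\int\rho=1$, and~\eqref{main:eq} demands $\delta[\rho]\ge c\,\inf_{\ell\in\mathcal{M}}\|\rho^{p/2}-\ell^{p/2}\|_2^2$, where $\mathcal{M}$ denotes the $(d{+}1)$-dimensional manifold of unit-mass optimizers obtained from a fixed radial optimizer $\ell_\star$ by translations and the dilations $\ell_\star\mapsto\sigma^d\ell_\star(\sigma\,\cdot)$, which preserve both the equality case and the mass. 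A useful consistency check: under such a dilation, both $\delta[\rho]$ and $\inf_\ell\|\rho^{p/2}-\ell^{p/2}\|_2^2$ scale by the factor $\sigma^{d(p-1)}$, so~\eqref{main:eq} is scale-covariant.

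\textbf{Local step.} Fix $\ell\in\mathcal{M}$ and set $u=\rho^{p/2}$, $u_0=\ell^{p/2}$, $\eta=u-u_0$. Then $\|\rho\|_p^p=\|u\|_2^2$ and the distance in~\eqref{main:eq} is exactly $\|\eta\|_2^2$, while $\int\rho=1$ is nonlinear in $\eta$ with linearization $\int\ell^{1-p/2}\eta\,\dx=0$. Expand $\delta[u^{2/p}]$ to second order in $\eta$: the zeroth-order term vanishes by equality at $\ell$, the first-order term vanishes by the Euler-Lagrange equation for $\ell$, and the second-order term is a quadratic form $\mathcal{Q}_\ell(\eta)=\langle\eta,L_\ell\eta\rangle$, where $L_\ell$ is self-adjoint on $L^2(\R^d)$ of the form (multiplication by an $\ell$-weighted potential) minus (a weighted Riesz convolution with kernel $|x-y|^{-\lambda}$) coming from the Hessian of $D[\rho]^{d(p-1)/\lambda}$. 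Local stability then reduces to a uniform spectral gap $\mathcal{Q}_\ell(\eta)\ge 2c\,\|\eta\|_2^2$ for every $\eta$ that is $L^2$-orthogonal to the tangent space $T_\ell\mathcal{M}$ and obeys the linearized mass constraint. The tangent space $T_\ell\mathcal{M}$ is spanned by the translation modes $\partial_i u_0$ ($i=1,\dots,d$) and the dilation mode $\tfrac{dp}{2}u_0+x\cdot\nabla u_0$, all of which are zero modes of $L_\ell$ by translation- and dilation-invariance of the functional.

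\textbf{Spectral analysis and compactness.} Decompose $L_\ell$ into spherical-harmonic sectors indexed by $\mathsf{j}\ge 0$, using the radial symmetry of $\ell$: the translation zero modes occupy the $\mathsf{j}=1$ sector, the dilation zero mode sits in the radial $\mathsf{j}=0$ sector. For $\mathsf{j}\ge 2$, strict positivity of $L_\ell$ follows from a direct comparison of the multiplication and convolution parts together with monotonicity of the Riesz kernel under spherical harmonic projection. In the $\mathsf{j}=1$ sector, positivity modulo the translation modes is essentially the linearized form of uniqueness of the Lane-Emden ground state modulo translations, which is available in the range~\eqref{eq:assumption_p_lambda} through~\cite{LieOxf-80,CarHofMaiVol-18,ChaGonHuaMaiVol-20,CalCarHof-21}. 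For the global step, argue by contradiction: a sequence $\rho_n$ with $\int\rho_n=1$ violating~\eqref{main:eq} can be normalized via the scale-covariance so that $\|\rho_n\|_p=\|\ell_\star\|_p$; then $\rho_n$ is bounded in $L^1\cap L^p$. A Lions-type concentration-compactness dichotomy applies. Vanishing would force $D[\rho_n]\to 0$ by HLS, which is incompatible with $\delta[\rho_n]\to 0$ at fixed $\|\rho_n\|_p>0$. Dichotomy is excluded by the strict superadditivity of the Lane-Emden best constant in the mass. Hence $\rho_n(\cdot+x_n)$ converges strongly in $L^p$ to some $\ell_\infty\in\mathcal{M}$, and for large $n$ the local step applies and contradicts the assumed ratio vanishing.

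\textbf{Main obstacle.} The principal difficulty is the spectral gap of $L_\ell$ in the radial sector $\mathsf{j}=0$, where the dilation direction is a genuine zero mode and the required non-degeneracy above it is strictly stronger than the uniqueness results used in the $\mathsf{j}=1$ sector. Since $L_\ell$ is non-local (it involves the Riesz convolution $|x|^{-\lambda}\ast\,\cdot\,$), classical Sturm oscillation does not apply directly, and one must combine the radial Lane-Emden ODE satisfied by $\ell$ with the explicit scaling structure of $\mathcal{M}$ to identify the full kernel and quantify the gap. The technical range~\eqref{eq:assumption_p_lambda} is precisely where the ODE analysis and the uniqueness inputs for the optimizer are jointly available in the literature.
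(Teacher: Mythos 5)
Your overall architecture (Hessian expansion around an optimizer, spectral gap orthogonal to the symmetry modes, Bianchi--Egnell compactness to globalize) is the same as the paper's, but the proposal leaves the genuinely hard point unproved. The crux is exactly what you label the ``main obstacle'': non-degeneracy in the radial sector $\mathsf{j}=0$ above the dilation mode. Your suggested remedy --- ``combine the radial Lane-Emden ODE satisfied by $\ell$ with the explicit scaling structure of $\mathcal{M}$'' --- is not available in the stated range: for $d-2<\lambda<d$ the linearized problem is governed by a genuinely non-local operator of the form $(-\Delta)^s+V$ with $s=(d-\lambda)/2<1$ (only the endpoint $\lambda=d-2$ is local), so there is no ODE for $\ell$ to exploit and, as you yourself note, Sturm oscillation fails. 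The paper's actual resolution is a new ingredient you do not supply: it exhibits an explicit zero-energy ``scattering'' solution built from the dilation and $\ell^{p/2}$ directions (equations \eqref{sw}--\eqref{sw3}), proves a theorem (Theorem~\ref{thm:scattering}, via monotonicity of a fractional Hamiltonian in the spirit of Frank--Lenzmann--Silvestre) saying that the existence of such a scattering solution excludes any radial decaying zero mode, and converts this into absence of the critical Birman--Schwinger eigenvalue. Without this (or an equivalent argument) the spectral gap, and hence the whole proof, is open. A related misstep: your claim that positivity in the $\mathsf{j}=1$ sector is ``essentially the linearized form of uniqueness'' from the cited literature has the logic backwards --- uniqueness of optimizers does not yield non-degeneracy of the linearization (the paper remarks that the implication goes the other way); the correct and simple argument there is Perron--Frobenius: $\ell^{p/2-1}\partial_i\ell$ are eigenfunctions of the compact positivity-preserving operator $A$ at the top of the $m=1$ channel.

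Two further places where the sketch would not close as written. First, the local step is not a routine second-order Taylor expansion in $\eta=\rho^{p/2}-\ell^{p/2}$: the constraint $\int\rho=1$ is nonlinear in $\eta$, $\ell$ has compact support, and $\rho-\ell$ is not controlled by $\eta$ in the norms where the Hessian is coercive. The paper has to introduce the remainder $X=\rho-\ell-\tfrac2p\ell^{1-p/2}\eta$, the exterior potential $V_\ell$ from the Euler--Lagrange equation, a cutoff on the set where $\eta<-\omega\ell^{p/2}$, interpolation bounds whose exponents are strictly supercritical only for $p<2$, and a separate argument at $p=2$; none of this is visible in your outline, and the naive expansion does not produce an error term that is $o(\|\eta\|_2^2)$ uniformly. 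Second, in the global step your normalization and the exclusion of dichotomy by ``strict superadditivity'' are fine in spirit, but note that the relevant compactness (optimizing sequences converge in $L^p$ up to translation) is exactly the content of Lions' concentration-compactness result that the paper cites, so this part is standard; it is the radial non-degeneracy, not the compactness, where the proposal has a genuine gap.
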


For our range of parameters~\eqref{eq:assumption_p_lambda}, uniqueness up to the trivial symmetries is proved in~\cite{ChaGonHuaMaiVol-20}. We thus know that the infimum in~\eqref{main:eq} is over functions of the form $\ell_{\kappa,y}(x) = \kappa^{d} \ell_{1,0}(\kappa (x-y))$ for $\kappa>0$ and $y\in \R^d$, where $\ell_{1,0}$ is the unique radial-decreasing Lane-Emden optimizer satisfying $\|\ell_{1,0}\|_1=\|\ell_{1,0}\|_p=1$. Let us emphasize that the bound~\eqref{main:eq} only involves the distance to the Lane-Emden optimizers \emph{having the same mass as $\rho$}. This is stronger than if we were minimizing over the mass as well. A natural question is whether one can, in addition, restrict the infimum to the $\ell$'s of the same $L^p$ norm as $\rho$. Our proof does not seem to allow it. Variations over $\kappa$ yield an additional orthogonality condition that plays an important role in our argument. Fixing $\|\rho\|_p$ instead of $\|\rho\|_1$ seems to be possible with our proof method, however. %, but we did not explore this possibility in detail.
%\cyan{Mention possibility of fixing $\|\rho\|_p$ instead.}

Note that the considered parameter range covers the usual Lane-Emden case $\lambda=d-2= d(p-1)$ (i.e., $p = 2 - 2/d$) for $d\geq 3$. The generalization to $p>2$ or $\lambda<d-2$ remains an open problem.

We follow a rather standard strategy for the proof. We start by studying the spectrum of the Hessian at an optimizer. The main result of this first step is formulated in Proposition~\ref{main:prop}, and Theorem~\ref{thm:scattering} below plays an essential role here. We then extend this result to prove local stability, which is somewhat tricky due to the various norms involved. Finally we get a global bound by resorting to a compactness argument from~\cite{Lions-84,BiaEgn-91}. Unfortunately, this last step of the method does not provide any explicit value for the stability constant~$c$ in~\eqref{main:eq}.

One novelty in our argument concerns the way we are able to get the non-degeneracy of the Hessian. We will explain below that the proof can be reduced to showing that a certain operator of the form $(-\Delta)^s+V(x)$ for suitable $0<s\leq 1$ has a trivial kernel in the space of radial functions going to zero at infinity.  The classical approach to handle this problem is to analyze the number of zeroes of the corresponding radial solution~\cite{Kwong-89,McLeod-93,Tao-06,Frank-13,FraLen-13,FraLenSil-16}. We argue differently and use instead a novel scattering-type argument which, we think, is of independent interest. It goes as follows. First we exhibit an $s$-wave zero-energy scattering solution $u$, that is, a radial solution of $\big((-\Delta)^s+V(x)\big)u=0$ converging to a non-trivial constant at infinity. Then we use that the existence of this solution implies the {\em non-existence} of a radial solution tending to 0 at infinity for the same equation, hence the triviality of the kernel. The proof of this implication in the local case is rather elementary, but the one for $0<s<1$ is not so easy. The precise statement is the following.

\begin{theorem}[No radial decaying solution if there is a scattering solution]\label{thm:scattering}
Let $0<s\leq1$ and $d\geq1$. Assume that $V:\R^d\mapsto \R$ is a radial, bounded, non-decreasing function that tends to zero at infinity. Let $f$ be a bounded radial solution of the equation
\begin{equation}\label{main:eq_scattering}
(-\Delta)^s f + V f = \tau V 
\end{equation}
for some $\tau \in \R$, with $\lim_{|x|\to\infty} f(x) = 0$. If $f(0) = \tau$, then $\tau=0$ and $f \equiv 0$. 

As a consequence, if there exists a solution $f$ as above with $\tau\neq0$, then there cannot exist a non-trivial bounded radial solution $g$ to $(-\Delta)^s g + V g =0$ tending to 0 at infinity.
\end{theorem}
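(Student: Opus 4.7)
First, the ``consequence'' part follows from the main statement. Suppose $f$ solves the inhomogeneous equation with $\tau\neq 0$ and $f(\infty)=0$, and assume toward contradiction that a nontrivial bounded radial $g$ with $((-\Delta)^s+V)g=0$ and $g(\infty)=0$ also exists. Applying the main statement to $g$ (which corresponds to parameter $0$) forces $g(0)\neq0$ (else $g\equiv0$). Then $h:=f+\alpha g$ with $\alpha=(\tau-f(0))/g(0)$ is a bounded radial solution of the inhomogeneous equation with $h(\infty)=0$ and $h(0)=\tau$, so the main statement gives $\tau=0$, a contradiction.

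For the main statement itself, I would first reduce to a homogeneous equation by setting $u=\tau-f$; since $(-\Delta)^s$ annihilates constants, $u$ solves $((-\Delta)^s+V)u=0$ with $u(0)=0$ and $u(\infty)=\tau$, and it suffices to show that any such bounded radial $u$ vanishes identically. In the local case $s=1$ this is elementary: the radial equation $(r^{d-1}u')'=r^{d-1}Vu$, combined with the regularity condition $u'(0)=0$ and the hypothesis $u(0)=0$, integrates twice to
\begin{equation*}
u(r)=\int_0^r s^{1-d}\int_0^s t^{d-1}V(t)u(t)\,\rd t\,\rd s,
\end{equation*}
from which $|u(r)|\leq(\|V\|_\infty r^2/(2d))\sup_{[0,r]}|u|$, forcing $u\equiv 0$ on a small interval $[0,r_0]$. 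Classical ODE uniqueness at any regular $r_0>0$ then extends $u\equiv 0$ to all of $[0,\infty)$.

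For $0<s<1$ this ODE approach is unavailable. Assuming $d>2s$ (the low-dimensional case requires a separate fundamental solution), I would turn to the Riesz representation: since $u$ is bounded and $|V|u$ is integrable, $u$ minus its Riesz convolution with $|V|u$ is bounded and $s$-harmonic on $\R^d$, hence constant, and matching at infinity gives
\begin{equation*}
u(x)=\tau+c_{d,s}\int_{\R^d}\frac{|V(y)|\,u(y)}{|x-y|^{d-2s}}\,\dy.
\end{equation*}
Evaluating at $x=0$ and using $u(0)=0$ yields the identity $\tau=-c_{d,s}\int|V(y)|\,u(y)\,|y|^{-(d-2s)}\,\dy$, so a contradiction would follow once we establish $u\geq 0$ whenever $\tau>0$. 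To produce the sign I would lift to the Caffarelli--Silvestre extension $U$ on $\R^d\times(0,\infty)$ with weight $t^{1-2s}$, boundary data $u$, and Neumann-type boundary condition proportional to $Vu$. The non-decreasing assumption on $V$ (equivalent to $V\leq0$ with $|V|$ radially non-increasing) combined with a weighted maximum-principle/Hopf argument for the degenerate local equation should rule out interior negative minima of $U$, delivering $u\geq0$.

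The main obstacle is precisely this last step. The local ODE argument localizes near the origin and concludes by a contraction-mapping estimate; for the fractional Laplacian the value of $(-\Delta)^s u$ at a point depends on $u$ everywhere, blocking any such localization. The substantive new input is the monotonicity of $V$ --- namely, $|V|$ being radially non-increasing --- used to transfer positivity through the non-local equation, which is what the maximum-principle argument in the extended half-space is designed to accomplish.
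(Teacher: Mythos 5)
Your treatment of the local case $s=1$ is fine (and in fact more elementary than the paper's, which uses monotonicity of the Hamiltonian $H(r)=\tfrac12 f'(r)^2-\tfrac12 V(r)(\tau-f(r))^2$; the paper itself remarks that for $s=1$ an ODE argument without the monotonicity of $V$ is possible). The deduction of the ``consequence'' part from the main statement also matches the paper. The problem is the non-local case $0<s<1$, which is the entire content of the theorem, and there your argument has a genuine gap that you yourself flag: everything is reduced to the claim that $u=\tau-f\geq0$ (say for $\tau>0$), to be obtained from a maximum-principle/Hopf argument in the Caffarelli--Silvestre extension, and no proof of this claim is given. Worse, the claim as stated cannot be the right intermediate step: at a negative minimum of $u$ on the boundary $t=0$ the Neumann datum is $-\lim_{t\to0}t^{1-2s}\partial_t U\propto(-\Delta)^su=|V|u\leq0$, which is exactly the sign allowed by the Hopf lemma, so no contradiction arises; and indeed bounded radial zero-energy solutions tending to a nonzero constant at infinity \emph{can} change sign for a deep attractive well even with $V$ radial, bounded and non-decreasing (already for $s=1$, $d=3$, $V=-k^2\chi_{B_1}$ with $k$ large, the bounded solution is $\sin(kr)/(kr)$ inside the well). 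So positivity of the scattering solution is not a consequence of the hypotheses, and any correct argument must use the assumption $u(0)=0$ in an essential way, which your sketch does not. In addition, your Riesz-representation step assumes $|V|u\in L^1$ and $d>2s$, neither of which follows from the stated hypotheses ($V$ is only bounded and tends to zero), and your scheme gives nothing in the case $\tau=0$, which is precisely the non-degeneracy statement (a radial solution of the homogeneous equation decaying at infinity cannot vanish at the origin) that both you and the paper need in order to deduce the ``consequence'' part.

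For comparison, the paper's proof does not attempt any pointwise positivity. It follows Frank--Lenzmann--Silvestre: with $u$ the $s$-harmonic extension of $f$, one introduces the radial Hamiltonian $H(x)=d_s\int_0^\infty t^{1-2s}\left(|\nabla_xu|^2-|\partial_tu|^2\right)\dt-V(x)|f(x)-\tau|^2$ and proves it is non-increasing in $|x|$, the monotonicity of $V$ entering only through the sign of the measure $V'$ in the derivative formula; a mollification argument is needed because $V$ may be as rough as a characteristic function. Then $H(\infty)=0$ together with $H(0)\leq0$ (which is where $f(0)=\tau$ and radiality enter) forces $H\equiv0$, and the vanishing of the various nonnegative terms yields $f\equiv0$, $\tau=0$, uniformly in $\tau$ (including $\tau=0$). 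Replacing this monotonicity mechanism by a maximum principle is exactly the missing idea in your proposal, so as it stands the non-local case is not proved.
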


We think here that $u=\tau-f$ is the scattering solution. Hence the theorem states that $s$-wave zero-energy scattering solutions cannot vanish at the origin. The last part of the statement is because if we had a non-trivial solution $g$ tending to 0 at infinity, then $g(0)\neq0$ by the theorem. But then $h=f+(\tau-f(0))g/g(0)$ would satisfy $h(0)=\tau$, a contradiction. 

The proof of Theorem~\ref{thm:scattering} is provided in Appendix~\ref{appendix}. It follows closely the work in~\cite{FraLen-13,FraLenSil-16}, where Theorem~\ref{thm:scattering} was proved in the case of $\tau=0$ under additional regularity assumptions on $V$. Since these regularity assumptions are not fulfilled in our case, we cannot directly apply the result in~\cite{FraLen-13,FraLenSil-16}, but we shall still closely follow the strategy developed there.

In our application, $s=(d-\lambda)/2$, and the restriction $s\leq 1$ in Theorem~\ref{thm:scattering} is the origin of the restriction $\lambda \geq d-2$ in Theorem~\ref{thm:main}. If Theorem~\ref{thm:scattering} can be proved for larger $s$, this would also immediately extend the validity of Theorem~\ref{thm:main}. Also the condition $p\leq 2$ enters in Theorem~\ref{thm:scattering} since only under this restriction $V$ is monotone increasing in our application. The condition $p\leq 2$ enters in other places of the proof as well, however.

\begin{remark}[Another proof of uniqueness]
The non-degeneracy of the linearized problem, which is proved in this paper, can also be used to recover the uniqueness result from~\cite{ChaGonHuaMaiVol-20}, by following a similar argument as in~\cite{FraLen-13,FraLenSil-16}. The idea is to follow the branch of any positive solution with Morse index one so as to reach the local case $\lambda=d-2$, where uniqueness is guaranteed by an ODE argument.
\end{remark}

\subsection{Application of the stability bound}

A functional  that is closely related to the Lane-Emden inequality has been studied recently  in connection with certain ``diffusion-aggregation'' type evolution equations. The functional is 
\begin{equation}\label{AGDIF2}
\cF(\rho) := \frac{1}{p-1}\int_{\R^d}\rho(x)^p{\rm d}x - \frac{\chi}{2} \iint_{\R^{2d}}\frac{\rho(x)\rho(y)}{|x-y|^\lambda}{\rm d}x\,{\rm d}y\ 
\end{equation}
where $\chi$ is a positive constant, $p>1$ and  %and $W(x) = -|x|^{-\lambda}$ for some 
$0 < \lambda< d$. The free energy~\eqref{AGDIF2} naturally occurs in some  diffusion-aggregation equations specified below, which dissipate $\cF$ while conserving the mass, and hence the evolution drives $\rho$ toward lower values of $\cF$.  In this context, it is thus natural to assume that $\rho$ has a fixed total mass $M := \int_{\R^d}\rho(x)\,{\rm d}x$. The first question this  raises is when $\cF$ has a minimum under this constraint. When there are minimizers, the next question is how close $\rho$ must be to one of the minimizers given that $\cF(\rho)$ is close to its minimum value.  Our main theorem gives an answer to this second question. To explain this we briefly recall some known facts \cite{LieOxf-80,CarHofMaiVol-18,ChaGonHuaMaiVol-20,CalCarHof-21} concerning the existence of minimizers of $\cF$. 

Let us denote by 
\begin{equation}
    F_M(\lambda,p,d,\chi) =\inf\left\{ \cF(\rho) \ :\  0\leq \rho\in (L^1\cap L^p)(\R^d), \ \int_{\R^d}\rho(x){\rm d}x = 
M\right\}
    \label{eq:min_cF}
\end{equation}
the minimal value of $\cF$ (which could be $-\infty$). Defining  $\rho_a(x) = a^{-d}\rho(x/a)$ for $a>0$, we find that 
\begin{equation}\label{AGDIF2Z}
\cF(\rho_a) := a^{-d(p-1)}\frac{1}{p-1}\int_{\R^d}\rho(x)^p{\rm d}x - a^{-\lambda}\frac{\chi}{2} \iint_{\R^{2d}}\frac{\rho(x)\rho(y)}{|x-y|^\lambda}{\rm d}x\,{\rm d}y\ .
\end{equation}
Evidently, $\lim_{a\to \infty} \cF(\rho_a) = 0$ and therefore $F_M(\lambda,p,d,\chi)\leq0$.

When $\lambda > d(p-1)$, then  $\lim_{a\to 0^+} \cF(\rho_a) = -\infty$, so that the functional is unbounded below in this case, i.e., 
$F_M(\lambda,p,d,\chi)=-\infty$ for $p<1+\frac\lambda{d}$. 
However, if $d(p-1) > \lambda$, $\cF$ is bounded below as a consequence of the Lane-Emden inequality, and $F_M(\lambda,p,d,\chi)$ is finite. 

In the critical case $\lambda = d(p-1)$,  $\cF(\rho_a) = a^{-\lambda}\cF(\rho)$, and the Lane-Emden inequality~\eqref{eq:Lane-Emden} implies 
$$F_M(\lambda,p,d,\chi)=\begin{cases}
-\infty&\text{for $M>M_c(d,\lambda,\chi)$} \\
0&\text{for $M\leq M_c(d,\lambda,\chi)$}\\
\end{cases}$$
%for $p=1+\lambda/d$ 
with
\begin{equation}\label{CRMASS}
M_c(d,\lambda,\chi) := \left(\frac{2d}{\chi\lambda\, a(\lambda,1+\lambda/d,d)}\right)^{\frac{d}{d-\lambda}}
\end{equation}
the critical mass. For $d= 3$ and $\lambda =1$, the critical value is $p = \frac43$. The sharp Lane-Emden constant $a(1,\frac43,3)$ and all of the 
optimizers were computed in \cite{LieOxf-80}. These results give the value $M_c(3,1,\chi)$. 

Next, we return to the case $p>1+\lambda/d$ where $\cF$ is bounded below for all values of the mass $M$. Let $\alpha := \frac {\lambda}{d(p-1)} < 1$. Applying the Lane-Emden inequality \eqref{eq:Lane-Emden} and optimizing over $\int \rho^p$ at fixed $\int \rho = M$ gives the value
\begin{equation}\label{eq:value_F}
F_M(\lambda,p,d,\chi) = \frac d \lambda (\alpha-1)  \left(\frac{\chi\lambda}{2d}a(\lambda,p,d) M ^{2- p\alpha}\right)^{\frac{1}{1-\alpha}}.
\end{equation}
We have $\cF(\rho)=F_M(\lambda,p,d,\chi)$ if and only if $\rho$ is a Lane-Emden optimizer satisfying in addition
\begin{equation}\label{AGDIF7}
\int_{\R^d}\rho(x)^p\,\dx = \left(\frac{\chi\lambda}{2d} a(\lambda,p,d) M ^{2- p \alpha}\right)^{\frac{1}{1-\alpha}}=:P_M(\lambda,p,d,\chi).
\end{equation}
Thus not every Lane-Emden minimizer $\rho$ with mass $M$ is a minimizer of $\cF$.  The functional $\cF$ is not scale invariant, as  displayed in \eqref{AGDIF2Z}, and in fact it has a preferred scale. The minimizers of $\cF$ with mass $M$ are precisely the Lane-Emden minimizers satisfying \eqref{AGDIF7}.

Our main result in Theorem~\ref{thm:main} can be used to obtain a remainder estimate for the inequality $\cF\geq F_M(\lambda,p,d,\chi)$, that is, quantifying how close a $\rho$ has to be to the manifold of Lane-Emden minimizers when $\cF(\rho)$ is close to its minimal value $F_M(\lambda,p,d,\chi)$. For simplicity, we take $M=1$ (the general case follows by scaling) and let $\mathcal{L}_\chi$ denote the set of all Lane-Emden optimizers satisfying \eqref{AGDIF7} (for $M=1$). 

\begin{corollary}\label{FCOR} Let $2\geq p > 1 + \frac{\lambda}{d}$ for $\lambda>0$ and $d-2< \lambda<d $. Let $\chi>0$. Then there exists a constant $c = c(\lambda,p,d) > 0 $ (independent of $\chi$) such that the functional $\cF$ in \eqref{AGDIF2} satisfies 
\begin{equation}\label{eq:cor}
\cF(\rho) - F_1(\lambda,p,d,\chi) \geq  c  \min_{\ell\in \mathcal{L}_\chi}  \int_{\R^d}\left(\rho(x)^{ \frac p2} - \ell(x)^{ \frac p2}\right)^2\dx
\end{equation}
for any $0\leq \rho\in (L^1\cap L^p)(\R^d)$ such that $\int_{\R^d}\rho(x)\,{\rm d} x=M=1$, where $F_1$ is defined in \eqref{eq:value_F}. 
\end{corollary}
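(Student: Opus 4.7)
My plan is to deduce \eqref{eq:cor} from Theorem \ref{thm:main} via a one-dimensional convexity argument that accounts for the ``scale'' degree of freedom: Theorem \ref{thm:main} controls the distance to all mass-one Lane--Emden optimizers $\ell_{\kappa,y}$, whereas $\mathcal{L}_\chi$ additionally fixes $\kappa=\kappa_0$ (where $\kappa_0^{d(p-1)}=P_1$). By the scaling $\rho(x)\mapsto\beta^d\rho(\beta x)$ with $\beta=\chi^{1/(d(p-1)-\lambda)}$ one reduces to a fixed value of $\chi$, yielding a stability constant independent of $\chi$. Setting $A:=\int\rho^p$, $B:=\iint\rho(x)\rho(y)|x-y|^{-\lambda}\,\dx\,\dy$, $\alpha:=\lambda/(d(p-1))$, $y_\rho:=(B/a)^{1/\alpha}$, $D:=\inf_{\kappa>0,\,y\in\R^d}\|\rho^{p/2}-\ell_{\kappa,y}^{p/2}\|_2^2$, and $D_\chi:=\min_{\ell\in\mathcal{L}_\chi}\|\rho^{p/2}-\ell^{p/2}\|_2^2$, Theorem \ref{thm:main} reads $A-y_\rho\geq cD$. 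Defining $g(y):=y/(p-1)-\chi a y^\alpha/2$, I rewrite
\begin{equation*}
\cF(\rho)-F_1 \;=\; \frac{A-y_\rho}{p-1} + \bigl[g(y_\rho)-g(x_0)\bigr] \;\geq\; \frac{cD}{p-1} + \bigl[g(y_\rho)-g(x_0)\bigr],
\end{equation*}
where $x_0:=P_1$. A direct computation shows that $g$ is strictly convex on $(0,\infty)$ with unique minimum $g(x_0)=F_1$, and satisfies $g(0^+)=0>F_1$ and $g(+\infty)=+\infty$; hence both terms on the right above are nonnegative.

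Next, I would split into two regimes. When $\cF(\rho)-F_1\geq\eps_0$ for a fixed small $\eps_0>0$, the Lane--Emden coercivity $\cF(\rho)\geq g(A)$ together with the linear growth of $g$ at infinity forces $A\lesssim \cF(\rho)-F_1+1$; the trivial bound $D_\chi\leq 2A+2P_1$ then yields \eqref{eq:cor} with a constant depending on $\eps_0$.

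In the small-defect regime $\cF(\rho)-F_1<\eps_0$, the listed properties of $g$ confine $y_\rho$ to a compact interval bounded away from $0$ and $\infty$ on which $g''\geq c_1>0$, so by Taylor expansion $|y_\rho-x_0|^2\leq C\bigl(g(y_\rho)-F_1\bigr)\leq C(\cF(\rho)-F_1)$. Combined with $A-y_\rho\leq(p-1)(\cF(\rho)-F_1)$ this yields $|A-x_0|\leq C'\sqrt{\cF(\rho)-F_1}$. Let $\tilde\ell=\ell_{\tilde\kappa,\tilde y}$ be a near-minimizer for $D$; since $\|\tilde\ell^{p/2}\|_2^2=\tilde\kappa^{d(p-1)}$, the reverse triangle inequality for $\|\cdot\|_2$ gives $|\sqrt A-\tilde\kappa^{d(p-1)/2}|\leq\sqrt D$. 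Combining these estimates with $x_0=\kappa_0^{d(p-1)}$ and the local bi-Lipschitz character of $\kappa\mapsto\kappa^{d(p-1)/2}$ at $\kappa_0$ produces $|\tilde\kappa-\kappa_0|\leq C''\sqrt{\cF(\rho)-F_1}$.

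The conclusion then follows from the triangle inequality and translation invariance of $\|\cdot\|_2$:
\begin{equation*}
D_\chi \;\leq\; 2\|\rho^{p/2}-\ell_{\kappa_0,\tilde y}^{p/2}\|_2^2 \;\leq\; 4D + 4\|\ell_{\tilde\kappa}^{p/2}-\ell_{\kappa_0}^{p/2}\|_2^2,
\end{equation*}
upon bounding $\|\ell_{\tilde\kappa}^{p/2}-\ell_{\kappa_0}^{p/2}\|_2^2\leq C(\tilde\kappa-\kappa_0)^2\leq C'''(\cF(\rho)-F_1)$ via the $L^2$-smoothness of the scaling map $\kappa\mapsto\ell_\kappa^{p/2}$ at $\kappa_0$. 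The step I expect to require the most care is this $L^2$-regularity, which reduces to verifying $x\cdot\nabla(\ell_1^{p/2})\in L^2(\R^d)$; it follows from the compact support of Lane--Emden optimizers in the subcritical regime $p>1+\lambda/d$ combined with the boundary behavior of $\ell_1$, but must be made quantitative. Once Theorem \ref{thm:main} is granted, the rest of the argument is routine.
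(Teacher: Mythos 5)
Your argument is correct, and it reaches the same destination as the paper's proof by a recognizably similar strategy (one-dimensional strict convexity in the scale variable plus Theorem~\ref{thm:main}), but the bookkeeping is genuinely different. The paper works with the function $g_\alpha(x)=x-x^\alpha/\alpha+1/\alpha-1$ evaluated at $P^{-1}\int\rho^p$, so the ``scale mismatch'' is measured directly through the $L^p$ norm of $\rho$; the two regimes are $\int\rho^p>2P$ (handled by $g_\alpha(x)\gtrsim x$) and $\int\rho^p\le 2P$, where the near-optimal $\ell$ is rescaled to $\ell_\chi\in\mathcal{L}_\chi$ and the mismatch $\|\ell^{p/2}-\ell_\chi^{p/2}\|_2$ is controlled by the difference of norms via \eqref{eq:upper_L_P} (which rests on the $C^{1,1}$ dependence of $\kappa\mapsto\|\ell^{p/2}-\ell_\kappa^{p/2}\|_2^2$) and then absorbed into the $g_\alpha$ term through $g_\alpha(x)\gtrsim(\sqrt x-1)^2$, with the $\chi$-dependence tracked explicitly through the constants in \eqref{tocF}--\eqref{toc2}. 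You instead reduce to $\chi=1$ by an exact scaling (a cleaner way to get $\chi$-independence), convexify in the interaction variable $y_\rho=(B/a)^{1/\alpha}$ rather than in $\int\rho^p$, split according to the size of the energy defect rather than of $\int\rho^p$, and pin the scale $\tilde\kappa$ of the near-optimal competitor by the identity $\|\ell_{\tilde\kappa,\tilde y}^{p/2}\|_2^2=\tilde\kappa^{d(p-1)}$ together with the reverse triangle inequality; this lets you get by with only the one-sided Lipschitz bound $\|\ell_{\tilde\kappa}^{p/2}-\ell_{\kappa_0}^{p/2}\|_2\le C|\tilde\kappa-\kappa_0|$, which is slightly weaker than the comparison \eqref{eq:upper_L_P} the paper needs. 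The step you flag as delicate is exactly the regularity input the paper also relies on: $\ell^{p-1}$ is Lipschitz by \cite[Prop.~2.12]{ChaGonHuaMaiVol-20}, hence $\ell^{p/2}$ is Lipschitz for $p\le 2$ and compactly supported, which gives the $L^2$-Lipschitz continuity of $\kappa\mapsto\ell_\kappa^{p/2}$ on compact subsets of $(0,\infty)$ (uniformly, not just the pointwise derivative condition $x\cdot\nabla\ell_1^{p/2}\in L^2$ you mention); with that citation inserted, your proof is complete. Two minor points to make explicit when writing it up: take $\eps_0<|F_1|$ so that the sublevel set $\{g\le F_1+\eps_0\}$ is a compact interval in $(0,\infty)$ on which $g''$ is bounded below, and work with a near-minimizer $\ell_{\tilde\kappa,\tilde y}$ for the unconstrained infimum (it need not be attained), letting the slack tend to zero at the end.
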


The minimum in \eqref{eq:cor} is over translations only, and it is easy to see that it is attained.

\begin{proof}
    For $\alpha<1$, the function $g_\alpha(x) := x - x^\alpha/\alpha + 1/\alpha -1 $ is non-negative for $x\geq 0$, and vanishes if and only if $x=1$. Let us use the simpler notation $P:=P_1(\lambda,p,d,\chi)$ for the value \eqref{AGDIF7} that $\int_{\R^d}\rho^p$ must have for minimizers.  
    With the aid of $g_\alpha$ we can rewrite $\mathcal{F}$ as 
\begin{multline*}
\mathcal{F}(\rho) = F_1(\lambda,p,d,\chi) + \frac{\alpha \chi a}2 P^\alpha g_\alpha\left( P^{-1}\int \rho^p \right) \\ 
 + \frac{\chi a}2 \left( \int \rho^p\right)^\alpha -\frac \chi 2 \iint \frac{\rho(x)\rho(y)}{|x-y|^\lambda}{\rm d}x \, {\rm d} y .
\end{multline*}
Applying the inequality \eqref{main:eq} as well as the simple bound $A^\alpha - B^\alpha \geq \alpha(A-B)A^{\alpha-1}$ for $A\geq B\geq 0$  yields the lower bound
\begin{multline}
\cF(\rho) - F_1(\lambda,p,d,\chi) \geq 
 \frac{\alpha \chi a}2 P^\alpha g_\alpha\left( P^{-1}\int \rho^p\right)
\\+
\frac {\alpha\chi a c}2 \left( \int \rho^p\right)^{\alpha-1} \inf_\ell  \int_{\R^d}\left(\rho(x)^{ \frac p2} - \ell(x)^{ \frac p2}\right)^2\dx
. \label{tocF}
\end{multline}
Note that the infimum here is over {\em all} Lane-Emden optimizers, not necessarily satisfying \eqref{AGDIF7}. Our goal will be to restrict the minimization to the ones satisfying this additional constraint, using the first term involving $g_\alpha$. 

Let us distinguish two cases. If $\int \rho^p > 2P$, we can use the fact that $g_\alpha(x) \geq C_\alpha x $ for $x \geq 2$ and a suitable constant $C_\alpha>0$ to conclude that 
\begin{align*}
\cF(\rho) - F_1(\lambda,p,d,\chi) &\geq 
 \frac{\alpha d }\lambda   C_\alpha\int \rho^p 
\geq \frac{\alpha d }\lambda   C_\alpha
 \min_{\ell \in \mathcal{L}_\chi}  \int_{\R^d}\left(\rho(x)^{ \frac p2} - \ell(x)^{ \frac p2}\right)^2\dx
.
\end{align*}
We can thus focus on the case $\int \rho^p \leq 2P$. Given a Lane-Emden optimizer $\ell$, let $\ell_\chi \in\mathcal{L_\chi}$ denote the corresponding rescaled Lane-Emden optimizer which has $\int \ell_\chi = 1$ and $\int \ell_\chi^p = P$ (and the same center as $\ell$). For instance, if $\ell$ is radial about the origin then $\ell_\chi(x)=\kappa^d\ell(\kappa x)$ with 
$$\kappa:=\left(P^{-1}\int_{\R^d}\ell^p\right)^{-\frac{1}{d(p-1)}}\geq 2^{-\frac{1}{d(p-1)}}.$$
Then we use 
\begin{equation}\label{toc1}
\| \rho^{p/2} - \ell_\chi^{p/2} \|_2 \leq \| \rho^{p/2} -\ell^{p/2}\|_2 + \| \ell^{p/2} - \ell_\chi^{p/2}\|_2
\end{equation}
and claim that the last term can be estimated by
\begin{equation}
\| \ell^{p/2} - \ell_\chi^{p/2}\|_2 \leq C \left| \| \ell^{p/2}\|_2 - \| \ell_\chi^{p/2}\|_2\right|=C\sqrt{P}\left|\kappa^{\frac{d(1-p)}2}-1\right|,
\label{eq:upper_L_P}    
\end{equation}
for a suitable constant $C>0$. Indeed, assuming again the center of $\ell$ is at the origin, we have
\begin{align*}
\| \ell^{p/2} - \ell_\chi^{p/2}\|_2^2&=P\left(1+\kappa^{d(1-p)}\right)-2\kappa^{\frac{dp}{2}}\int_{\R^d}\ell(x)^{\frac{p}2}\ell(\kappa x)^{\frac{p}2}  
\end{align*}
which is $C^{1,1}$ with respect to  $\kappa$, since $\ell^{p/2}$ is a Lipschitz function ($\ell^{p-1}$ is, and $p\leq 2$). Moreover, we can bound $g_\alpha(x) \geq C_\alpha' (\sqrt{x}-1)^2$ for suitable $C_\alpha'>0$, and hence
\begin{equation}\label{toc2}
\| \ell^{p/2} - \ell_\chi^{p/2}\|_2^2 \leq C^2 \| \ell_\chi^{p/2}\|_2^2  (C_\alpha')^{-1} g_\alpha\left( P^{-1}\int \rho^p\right).
\end{equation}
Combining \eqref{toc1} and \eqref{toc2} with \eqref{tocF} yields the claimed result. 
\end{proof}

Corollary~\ref{FCOR} has applications to a family of ``diffusion-aggregation'' type evolution equations for mass densities $\rho$ on $\R^d$. These equations take the form
\begin{equation}\label{AGDIF1}
\frac{\partial}{\partial t}\rho  = \Delta \rho^p - \chi \nabla\cdot \rho (\nabla \rho\ast|x|^{-\lambda}),
\end{equation}
where $\chi$ is a constant, $p>1$ and $0 < \lambda< d$.   Notice that $M = \int_{\R^d}\rho(x)\,{\rm d}x$ is at least formally conserved by the evolution.  The  non-linear diffusion term competes against the aggregating effect of the attractive potential.  Since
$$\frac{\delta \cF}{\delta\rho}= \frac{p}{p-1}\rho^{p-1} - \chi\, \rho\ast|x|^{-\lambda},$$ 
the equation \eqref{AGDIF1} can be written as
\begin{equation}\label{AGDIF27}
\frac{\partial}{\partial t}\rho  = \nabla\cdot \left(\rho \nabla  \frac{\delta \cF}{\delta\rho}\right)\ ,
\end{equation}
from which it follows that along the flow described by the equation, 
$$ \frac{{\rm d}}{{\rm d}t}\cF(\rho)   = -\int_{\R^d} \rho \left|\nabla  \frac{\delta \cF}{\delta\rho}\right|^2{\rm d}x \leq 0\ .$$ 

There is at present no rate information on how fast $\cF(\rho(\cdot,t))$ decreases to its minimum value along the flow described by \eqref{AGDIF1}. It is not even known in general that the flow arrives at a global minimizer of $\cF$,  although such information has been obtained for the  closely related Keller-Segel equation in $d=2$ \cite{BlaCarCar-12,CarFig-13}. As explained in this section, granted a bound on the rate of decay of
$\cF(\rho(\cdot,t)) - F_1(\lambda,p,d,\chi)$, 
the stability estimate proved here would give a quantitative estimate on how close $\rho(\cdot ,t) $ is to a Lane-Emden optimizer $\ell$. 

Finally, we remark that the restriction to $M=1$ in Corollary~\ref{FCOR} entails no loss of generality. For a solution $\rho(x,t)$ of \eqref{AGDIF1} that has mass $M$, define $\tilde \rho(x,t) = M^{-1}\rho(x,M^{1-p}t)$ and $\tilde{\chi} = M^{2-p}\chi$.  Then $\tilde\rho(x,t)$ solves \eqref{AGDIF1}  with $\chi$ replaced by $\tilde\chi$, and has unit mass.

%%%%%%%%%%%%%%%%%%%%%%%%%%%%%%%%%%%%%%%%%%%%%%%%%%%%%%%%%%%%%%%%%%%%%%%%%%
%%%%%%%%%%%%%%%%%%%%%%%%%%%%%%%%%%%%%%%%%%%%%%%%%%%%%%%%%%%%%%%%%%%%%%%%%%
\section{Proof of the main result}

Throughout the proof we use the simplified notation
$$
D(\rho_1,\rho_2) = \iint_{\R^{2d}} \frac{\rho_1(x)\rho_2(y)}{|x-y|^\lambda} \dx\, \dy.
$$

\subsection{Euler-Lagrange equation}

Let $\ell$ be a maximizer for \eqref{LEp}, with $\|\ell\|_1 = 1 = \|\ell\|_p$.
%$\E(\ell^2) = c_1$, $\ell \geq 0$ and $\int \ell^2 = 1$. We know that $\ell$ is unique up to translations and dilations, and has compact support.
We expand
$$
\frac{D(\ell + \eps g, \ell + \eps g)}{\|\ell + \eps g\|_p^{\frac {p\lambda}{d(p-1)}}} = a  + 2 \eps  D(\ell, g) - {\frac {p\lambda}{d(p-1)}} \eps a \int \ell^{p-1} g + o(\eps)
$$
and conclude that
$$
\ell* |x|^{-\lambda} = {\frac {p\lambda a}{2d(p-1)}} \ell^{p-1} + \mu
$$
on the support of $\ell$, and $\leq$ otherwise. Multiplying this by $\ell$ and integrating, we find
\begin{equation}\label{def:mu}
\mu = a \left( 1 - {\frac {p\lambda}{2d(p-1)}} \right) > 0.
\end{equation}
We conclude that the Euler-Lagrange equation reads
\begin{equation}\label{ELe}
\boxed{ {\frac {p \lambda a}{2d(p-1)}}  \ell^{p-1} = \left[ \ell* |x|^{-\lambda}  - \mu\right]_+}
\end{equation}
 and that $\ell$ necessarily has compact support.

\subsection{Computation of the Hessian}
As usual, the stability in Theorem~\ref{thm:main} will follow from a non-degeneracy property of the Hessian. We thus start by computing it.

Let $\ell$ be any Lane-Emden maximizer. Without loss of generality, we can assume that $\ell$ is radial decreasing about the origin. For $g$ with $\int \sqrt{\ell} g = 0$, we  have
\begin{align*}
&\frac{D( (\sqrt{\ell} + \eps g)^2, (\sqrt{\ell} + \eps g)^2 ) }{\|\sqrt\ell + \eps g\|_{2p}^{\frac {2p\lambda}{d(p-1)}}\|\sqrt\ell + \eps g\|_{2}^{4-\frac {2p\lambda}{d(p-1)}} }  = a
\\ &  +  \eps^2 \left(  4 D(\sqrt\ell g, \sqrt\ell g) - \frac{2 \lambda p a }{d} \int \ell^{p-1} g^2 - \frac{2\lambda p^2 a}{d(p-1)} \left( \frac \lambda{d(p-1)}-1\right) \left( \int \ell^{p-1/2} g \right)^2 \right)  \\ &+ o(\eps^2),
\end{align*}
which identifies the Hessian (at least on the support of $\ell$) as $Q H Q$, where $Q = 1 - | \sqrt\ell\rangle\langle\sqrt\ell |$ and
\begin{equation}\label{def:H}
H = 4 \sqrt{\ell} R \sqrt{\ell} -  \frac{2 \lambda p a}{d}  \ell^{p-1} - \frac{2\lambda p^2 a}{d(p-1)} \left( \frac \lambda{d(p-1)}-1\right) | \ell^{p-1/2} \rangle \langle \ell^{p-1/2}|.
\end{equation}
We introduced the notation $R$ for the convolution with $|x|^{-\lambda}$. Note that we can also write $R = \cd (-\Delta)^{(\lambda-d)/2}$ for a suitable constant $\cd >0$.
We shall abbreviate
\begin{equation}\label{def:alpha}
\alpha :=  \frac{2\lambda p^2 a}{d(p-1)} \left( \frac \lambda{d(p-1)}-1\right),
\end{equation}
which equals zero in the \lq\lq local\rq\rq\ case $\lambda = d(p-1)$.

The maximizing property of $\ell$ implies that $QHQ\leq 0$. In the following we want to prove that there exists a $\kappa>0$ such that
\begin{equation}\label{gap}
\langle g , H g\rangle \leq - \kappa \int g^2 \ell^{p-1}
\end{equation}
for all $g \perp \{ \sqrt\ell, \nabla \sqrt\ell , \varphi\}$ with $\varphi = \frac d2 \sqrt\ell + x\cdot \nabla \sqrt\ell$. We shall in fact prove \eqref{gap} for all $g \perp \{ \sqrt \ell, \ell^{p-1} \nabla \sqrt\ell , \ell^{p-1} \varphi\}$ which will be more natural and convenient below. %The proof below works for $p\leq 2$.

We shall write
$$
H= \ell^{(p-1)/2} \left( A -  \frac{2 \lambda p  }{d}  a - \alpha | \ell^{p/2} \rangle\langle \ell^{p/2} | \right) \ell^{(p-1)/2}.
$$
Note that the operator
\begin{equation}
A = 4 \ell^{1-p/2} R \ell^{1-p/2}=4 \ell^{1-p/2} \big(|x|^{-\lambda}\ast\cdot\big) \ell^{1-p/2}
\label{eq:def_A}
\end{equation}
is clearly compact for $p\leq 2$. For $p=2$, we interpret $ \ell^{1-p/2}$ as the characteristic function of the support of $\ell$.

The main result of this section is the following non-degeneracy property of the operator $A$ in~\eqref{eq:def_A}.

\begin{proposition}[Non-degeneracy]\label{main:prop}
Assume that $p_c< p\leq 2$ and $d-2\leq \lambda < d$. Then there exists a constant $\kappa = \kappa(\lambda,p,d) > 0$ such that
\begin{equation}\label{d3}
\langle h ,A h \rangle -\alpha \langle h , \ell^{p/2}\rangle^2 \leq \left(  \frac {2p\lambda} d a  - \kappa\right) \|h\|^2 \quad \forall h \perp \{\ell^{1-p/2},\ell^{p/2-1} \nabla \ell, \ell^{(p-1)/2}\varphi\}.
\end{equation}
\end{proposition}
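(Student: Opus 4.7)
The plan is to combine the compactness of $A$ with an identification of the kernel of the Hessian via Theorem~\ref{thm:scattering}, and then extract a spectral gap.

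First, I would verify that $A = 4\ell^{1-p/2} R\, \ell^{1-p/2}$ is a compact self-adjoint operator on $L^2(\R^d)$. Since $p \leq 2$ the weight $\ell^{1-p/2}$ is bounded with compact support, and $R = \cd (-\Delta)^{-s}$ with $s = (d-\lambda)/2 \in (0,1]$ is smoothing, so compactness follows from Rellich--Kondrachov. Writing $B := A - \frac{2p\lambda a}{d} I - \alpha |\ell^{p/2}\rangle\langle\ell^{p/2}|$, the operator $B$ is a compact perturbation of $-\frac{2p\lambda a}{d} I$, hence has essential spectrum $\{-\frac{2p\lambda a}{d}\}$ and only finitely many eigenvalues of finite multiplicity above any negative threshold.

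The optimality of $\ell$ (under the constraint $\int\rho=1$) implies $B \leq 0$ on $\{\ell^{1-p/2}\}^\perp$. Moreover, differentiating the Euler--Lagrange equation~\eqref{ELe} along the translation and dilation orbits of $\ell$ yields $d+1$ explicit zero eigenvectors in $\{\ell^{1-p/2}\}^\perp$, namely $\ell^{p/2-1}\partial_i\ell$ ($i=1,\dots,d$) and $\ell^{(p-1)/2}\varphi$. The claim~\eqref{d3} thus reduces to showing that these are the \emph{only} elements of $\ker B \cap \{\ell^{1-p/2}\}^\perp$.

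I would decompose $L^2(\R^d)$ into angular momentum sectors. For a radial $h \in \ker B$ with $\langle h,\ell^{p/2}\rangle = 0$---which can be arranged by subtracting the appropriate multiple of the scaling mode $\ell^{(p-1)/2}\varphi$, since this is the only listed kernel element not orthogonal to $\ell^{p/2}$---the rank-one term vanishes and the substitution $u := R(\ell^{1-p/2}h)$ turns $Bh=0$ into the homogeneous equation
\begin{equation*}
\bigl((-\Delta)^s + V\bigr) u = 0 \quad \text{on }\R^d,\qquad V(x) := -\frac{2d\cd}{p\lambda a}\, \ell(x)^{2-p}\, \chi_{\{\ell>0\}},
\end{equation*}
with $u$ radial and tending to $0$ at infinity. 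The potential $V$ is radial, bounded, non-positive, tends to zero at infinity, and is \emph{non-decreasing in $|x|$} because $\ell$ is radial-decreasing and $2-p \geq 0$. A scattering solution is produced as a linear combination $A_0\, R\ell + A_s\, R(d\ell + x\cdot\nabla\ell)$, the coefficients being chosen via the Euler--Lagrange equation to cancel an extraneous multiple of $\ell$ and obtain the scattering form $((-\Delta)^s + V)u_{\mathrm{scat}} = \tau V$ with $\tau \neq 0$. Theorem~\ref{thm:scattering} then forces $u\equiv 0$, hence $h\equiv 0$. Analogous arguments based on the translation modes $\partial_i R\ell$ handle the $l=1$ sector, and for $l\geq 2$ the centrifugal contribution combined with the sign and monotonicity of $V$ rules out any nontrivial decaying solution.

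The principal obstacle is this reduction to Theorem~\ref{thm:scattering}---in particular the explicit construction of the scattering solution from $R\ell$ and $R(d\ell+x\cdot\nabla\ell)$, the careful bookkeeping required to cancel the $\ell$-inhomogeneity that appears for $p<2$, and the treatment of the non-radial sectors. Once $\ker B \cap \{\ell^{1-p/2}\}^\perp$ is pinned down to $\mathrm{span}\{\ell^{p/2-1}\partial_i\ell,\, \ell^{(p-1)/2}\varphi\}$, the gap $\kappa > 0$ in~\eqref{d3} is obtained as the distance from $0$ to the largest nonzero eigenvalue of $B$ on the orthogonal complement of the three vectors, which is strictly negative by the compactness and kernel analysis above.
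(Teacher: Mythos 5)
Your overall architecture (compactness, angular-momentum decomposition, reduction of the radial sector to Theorem~\ref{thm:scattering} via a Birman--Schwinger step, and a scattering solution built from $R\ell$ and $R(x\cdot\nabla\ell)$) is the same as the paper's, but there is a genuine gap in the radial sector. Your starting point, that $\ell^{(p-1)/2}\varphi$ is a zero eigenvector of $B=A-\frac{2p\lambda a}{d}-\alpha|\ell^{p/2}\rangle\langle\ell^{p/2}|$, is false: by the identity \eqref{sw} one has $B\,\ell^{(p-1)/2}\varphi=2\lambda\mu\,\ell^{1-p/2}\neq 0$, so the dilation mode is only a zero mode of the \emph{compression} of $B$ to $\{\ell^{1-p/2}\}^{\perp}$ (a Lagrange-multiplier effect of the mass constraint), not of $B$ itself. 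Consequently, the reduction of \eqref{d3} to ``$\ker B\cap\{\ell^{1-p/2}\}^{\perp}$ is spanned by the translation and dilation modes'' is not the right statement: what must be excluded is a zero mode of the compressed operator, i.e.\ a radial $y\perp\{\ell^{1-p/2},\ell^{(p-1)/2}\varphi\}$ with $\bigl(A-\tfrac{2p\lambda}{d}a\bigr)y=\alpha\langle\ell^{p/2},y\rangle\,\ell^{p/2}+\sigma\,\ell^{1-p/2}$ for some $\sigma$. Your device of ``subtracting a multiple of the scaling mode'' to arrange $\langle h,\ell^{p/2}\rangle=0$ does not achieve this, precisely because the scaling mode is not in $\ker B$; after the subtraction the equation acquires the very inhomogeneity $\sigma\,\ell^{1-p/2}$ that your argument never treats. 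The scattering theorem plus Birman--Schwinger only disposes of the homogeneous case ($\sigma=0$, $\langle\ell^{p/2},y\rangle=0$), i.e.\ it shows $A$ has no radial eigenvalue $\frac{2p\lambda}{d}a$; the paper then needs a second ingredient to handle the inhomogeneous case, namely the pair of identities \eqref{sw} and \eqref{sw2} together with the nonvanishing $2\times2$ determinant, which forces any such $y$ to be a combination of $\ell^{(p-1)/2}\varphi$ and $\ell^{p/2}$ and hence to vanish by orthogonality. Without this step your kernel analysis does not yield \eqref{d3}, since the compression can have a zero eigenvalue even when $B$ has trivial kernel on the subspace.

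Two secondary points. For $m=1$ and $m\geq 2$ your sketch is not yet a proof: merely ruling out a decaying zero-energy solution in higher sectors is not the needed statement (no orthogonality constraints are available there, so one needs the top of the spectrum of $A$ to lie strictly below $\frac{2p\lambda}{d}a$); the paper gets this from Perron--Frobenius non-degeneracy of the eigenvalue $\frac{2p\lambda}{d}a$ in the $m=1$ sector (the translation mode, which \emph{is} a decaying eigenfunction, so any ``no decaying solution'' claim cannot hold uniformly in $m\geq1$) together with strict monotonicity of $A$ in the angular momentum. Finally, your assertion that optimality gives $B\leq 0$ on $\{\ell^{1-p/2}\}^{\perp}$ requires transferring the second-order condition $QHQ\leq0$ through the weight $\ell^{(p-1)/2}$ in \eqref{def:H}, including functions supported outside $\supp\ell$; this is harmless but should be said, since it is the only source of the sign information your gap-extraction step relies on.
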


The rest of the section is devoted to the proof of this proposition.

\begin{proof}
Since $\ell$ is radial,  we can  distinguish the various angular momentum channels for the spectral analysis. Since the letter $\ell$ is already in use, we shall denote the angular momentum quantum number by $m$.

\medskip

\noindent {\bf Case $m \geq 2$}. Since $A$ is strictly decreasing in angular momentum, and $A\leq \frac {2\lambda p}d a$ for $m =1$, we have $A\leq\frac {2\lambda p}d a- \kappa$ for $m \geq 2$ for some $\kappa>0$ by compactness.

\medskip

\noindent {\bf Case $m = 1$}. Note that $A \ell^{p/2-1} \nabla \ell = \frac {2 p \lambda}d a  \ell^{p/2-1} \nabla \ell$ (as differentiation of the Euler-Lagrange equation \eqref{ELe} for $\ell$ shows). By Perron-Frobenius (and the fact that $\ell^{p/2} $ is a radial decreasing function), the largest eigenvalue is non-degenerate (apart from the trivial $d$-fold degeneracy), hence by compactness of $A$
$$
\langle h | A | h\rangle \leq \left( \frac {2 p \lambda}d a - \kappa\right) \| h \|^2 \quad \text{if $\langle h| \ell^{p/2-1} \nabla \ell\rangle =0$,}
$$
which proves the desired inequality.

\medskip

\noindent {\bf Case $m = 0$}. A simple calculation shows that
\begin{equation}\label{sw}
\left(A - \frac {2p\lambda} d a \right) \ell^{(p-1)/2} \varphi = p\lambda a\left( \frac{\lambda}{d(p-1)} - 1\right) \ell^{p/2} +  2\lambda \mu \ell^{1-p/2}
\end{equation}
with $\mu$ given in \eqref{def:mu}.
Since
$$
\langle \ell^{p/2} , \ell^{(p-1)/2} \varphi \rangle = \frac d 2 \left ( 1 - \frac 1p\right)
$$
we get
$$
\left(A - \frac {2p\lambda} d a - \alpha | \ell^{p/2} \rangle\langle \ell^{p/2} |\right) \ell^{(p-1)/2} \varphi =  2\lambda \mu \ell^{1-p/2}
$$
with $\alpha$ defined in \eqref{def:alpha}. Since $\varphi \perp \sqrt\ell$, this also implies that  $\langle \ell^{(p-1)/2} \varphi ,A \ell^{(p-1)/2} \varphi\rangle  - \alpha \langle \ell^{(p-1)/2} \varphi, \ell^{p/2} \rangle^2 = \frac {2p\lambda}d a\| \ell^{(p-1)/2} \varphi \|^2$.
Note also that, because of \eqref{ELe},
\begin{equation}\label{sw2}
\left(A - \frac {2p\lambda} d a \right) \ell^{p/2}  =
    {\frac {2 p \lambda a}{d}} \left( \frac 1{p-1} -1 \right) \ell^{p/2}  + 4 \mu \ell^{1-p/2}.
\end{equation}
Let $\tilde Q$ be the projection orthogonal to $\ell^{1-p/2}$. Assume that %$y$ satisfies
$$
\tilde Q \left( A - \frac {2p\lambda}d a - \alpha | \ell^{p/2} \rangle\langle \ell^{p/2} | \right) y =0
$$
for some non-zero $y\perp \{\ell^{1-p/2}, \ell^{(p-1)/2}\varphi\}$.
Then
$$
\left( A - \frac {2p\lambda}d a\right) y  = \alpha \langle \ell^{p/2} , y\rangle    \ell^{p/2} + \sigma \ell^{1-p/2}
$$
for some $\sigma$. {\em Assume} for the moment that $A$ does not have the eigenvalue $ \frac {2p\lambda}d a$. It follows from \eqref{sw} and \eqref{sw2}, and the non-vanishing of the corresponding $2\times 2$ determinant
$$
\det \left( \begin{array}{cc} p\lambda a\left( \frac{\lambda}{d(p-1)} - 1\right)  & 2\lambda \mu \\ {\frac {2 p \lambda a}{d}} \left( \frac 1{p-1} -1 \right) & 4 \mu \end{array} \right) = 4\mu p \lambda a \left( \frac \lambda d -1\right) < 0
$$
 that $y$ must be a linear combination of $\ell^{(p-1)/2}\varphi$ and $\ell^{p/2}$, and hence vanishes by the orthogonality assumption.
This proves the desired bound \eqref{d3} by compactness.

We are left with showing that $A$ does not have an eigenvalue $ \frac {2p\lambda}d a$. For this we use a novel scattering argument. We note that \eqref{sw} and \eqref{sw2} imply in combination that
\begin{equation}\label{sw3}
\left(A - \frac {2p\lambda} d a \right) f = 4\tau \ell^{1-p/2}
\end{equation}
with
$$
f=  {\frac {2}{d}} \left( \frac 1{p-1} -1 \right) \ell^{(p-1)/2}\varphi - \left( \frac{\lambda}{d(p-1)} - 1\right) \ell^{p/2}
$$
and
$$
\tau =  \mu \left(1  -  {\frac {\lambda  }{d}}\right)> 0.
$$
Using that $R = \cd (-\Delta)^{-s}$ with $s= (d-\lambda)/2$, we can rewrite \eqref{sw3}  as
\begin{equation}\label{psieq}
\left( \frac {2p\lambda a}{d \cd} (-\Delta)^s  +W\right) \psi = \tau W
\end{equation}
for $\psi  = R \ell^{1-p/2} f$ and $W=- 4 \ell^{2-p}$. Note that our assumptions imply that $0<s\leq 1$. 
This is where we can use Theorem~\ref{thm:scattering}. Recall that the latter states that the existence of a zero-energy scattering solution as in~\eqref{psieq} implies the absence of a radial solution vanishing at infinity.
By the Birman-Schwinger principle~\cite[Sec.~1.2.8]{FraLapWei-LT}, this in turn implies the absence of an eigenvalue $1$ of $(d/2p \lambda a) A$, as desired.
This completes the proof of Proposition~\ref{main:prop}.\end{proof}

\begin{remark}
It is a consequence of our proof that radial-decreasing solutions $V\geq0$ to the nonlinear equation
\begin{equation}
(-\Delta)^sV=(V-1)_+^{\frac1{p-1}}
\label{eq:NLS_LE}
\end{equation}
are non-degenerate, in the sense that the linearized operator satisfies
$$\ker\left((-\Delta)^s-\frac{(V-1)_+^{\frac{2-p}{p-1}}}{p-1}\right)={\rm span}\left\{\partial_{x_j}V,\ j=1,...,d\right\}.$$
The equation~\eqref{eq:NLS_LE} is similar, although not identical, to the fractional nonlinear Schr\"o\-dinger equation studied in~\cite{FraLen-13,FraLenSil-16}, where $(V-1)_+^{\frac1{p-1}}$ is replaced by $V^{\frac1{p-1}}-V$. When $s=1$ (that is, $\lambda=d-2$),~\eqref{eq:NLS_LE} is the Lane-Emden equation.
\end{remark}

\subsection{Stability}
Let $\ell \geq 0$ be the unique radial maximizer of the Lane-Emden problem \eqref{LEp}.
In the previous section we have shown that, as a consequence of Proposition~\ref{main:prop}, there exists a $\kappa>0$ such that
\begin{equation}\label{gap2}
\langle g , H g\rangle \leq - \kappa \int g^2 \ell^{p-1}
\end{equation}
for all  $g \perp \{ \sqrt \ell, \ell^{p-1} \nabla \sqrt\ell , \ell^{p-1} \varphi\}$, where the Hessian $H$ is defined in \eqref{def:H}.
We shall now argue that this implies a
stability estimate of the desired form \eqref{main:eq}. Equivalently, by uniqueness of Lane-Emden maximizers up to symmetries,
we have to show that for some $c>0$
\begin{equation}\label{tos}
 \|\rho\|_p^{p} \geq \left( a^{-1} D(\rho,\rho) \right)^\frac{d(p-1)} {\lambda}+ c  \inf_{\kappa,y}   \| \rho^{ \frac p2} - \ell_{\kappa,y}^{ \frac p2}\|_{2}^2
\end{equation}
for all $L^1$-normalized, non-negative $\rho$, where
$\ell_{\kappa,y}(x) = \kappa^{d} \ell(\kappa (x-y))$ for $\kappa>0$ and $y\in \R^d$.

Let us first assume that the infimum in \eqref{tos} is attained. This will be the case if $\inf_{\kappa,y}    \int ( \rho^{p/2} -\ell_{\kappa,y}^{p/2})^{2} < \int \rho^{p}$, since
$$
\lim_{y\to \infty}   \int \left( \rho^{p/2} -\ell_{\kappa,y}^{p/2} \right)^{2} = \int \rho^{p} + \kappa^{d(p-1)} \int \ell^{p},
$$
$$
\lim_{\kappa \to 0}   \int \left( \rho^{p/2} -\ell_{\kappa,y}^{p/2} \right)^{2}  = \int \rho^{p},
$$
while
$$
\lim_{\kappa\to\infty}   \int \left( \rho^{p/2} -\ell_{\kappa,y}^{p/2} \right)^{2} = +\infty.
$$
If $\inf_{\kappa,y}    \int ( \rho^{p/2} -\ell_{\kappa,y}^{p/2})^{2} < \int \rho^{p}$, the infimum is thus a minimum, and by the scale and translation invariance of the problem,
we can assume without loss of generality  that it is attained at $\kappa=1$, $y=0$.

Let us denote
$$
\delta = \rho^{p/2} - \ell^{p/2} \ , %\quad Y = f^2 - \ell^2 \ ,
\quad X = \rho - \ell - \frac 2p \ell^{1-p/2} \delta.
$$
Note that $0\leq X \leq |\delta|^{2/p}$ under our assumption $p\leq 2$.
The fact that $\inf_{\kappa,y}    \int ( \rho^{p/2} -\ell_{\kappa,y}^{p/2})^{2}$ is minimized at $\kappa=1$, $y=0$, implies that $\delta \perp \{ \nabla \ell^{p/2}, d p \ell^{p/2} /2 + x\cdot \nabla \ell^{p/2}\}$. In other words,  $ \ell^{(1-p)/2} \delta \perp \{ \ell^{p-1} \nabla \sqrt\ell, \ell^{p-1}\varphi\} $.

We have
$$
\int \rho^p = 1 + 2 \int \ell^{p/2} \delta + \int \delta^2 %=  1 +  \int \ell^{p-1}\ell^{1-p/2} \delta + \int \delta^2
$$
and
\begin{align}\nonumber
D(\rho,\rho) & = a + 2  D(\ell, 2p^{-1} \ell^{1-p/2} \delta+X) + \frac 4 {p^2} D(\ell^{1-p/2} \delta,\ell^{1-p/2} \delta) \\ & \quad + 2 D (X, 2p^{-1} \ell^{1-p/2}\delta ) + D(X,X).\label{drr}
\end{align}
Note that the Lane-Emden equation \eqref{ELe} can equivalently be written as
\begin{equation}\label{ELeq2}
 {\frac {p \lambda a}{2d(p-1)}}  \ell^{p-1} =  \ell* |x|^{-\lambda}  - \mu + V_\ell
\end{equation}
with
$$
V_\ell(x) = \left[  \mu- \ell* |x|^{-\lambda}   \right]_+,
$$
which is a non-negative function supported on the complement of the support of $\ell$.
It follows from   \eqref{ELeq2} and $\int (\rho-\ell) = 0$  that
$$
2  D(\ell, 2p^{-1} \ell^{1-p/2} \delta+X) = \frac{2\lambda a}{d(p-1)} \int \ell^{p/2} \delta - 2  \int V_\ell \rho + \frac{p\lambda a}{d(p-1)} \int \ell^{p-1} X.
$$

We start with some \emph{a priori} estimates. All the terms on the right hand side of \eqref{drr}, except for the constant $a$, can be bounded in terms of either $\|\delta\|_2$ or  $\|X\|_{p_c}$ (by HLS, using also H\"older in the form $\| \ell^{1-p/2} \delta\|_{p_c} \leq \| \ell^{1-p/2} \|_{2/(1-\lambda/d)} \|\delta\|_2$). The latter can be bounded as
$$
\| X\|_{p_c} \leq \| X\|_1^{(p/p_c-1)/(p-1)} \| X\|_{p}^{p(1-1/p_c)/(p-1)} \leq \|\delta\|_2^{2(1-1/p_c)/(p-1)} \left( \int X\right)^{(p/p_c-1)/(p-1)}
$$
using H\"older and  $0\leq X \leq |\delta|^{2/p}$. Moreover,
$$
\int X = -\frac 2p \int \ell^{1-p/2} \delta \leq \frac 2 p \|\delta\|_2 \| \ell^{1-p/2}\|_2
$$
and hence all the terms are small as long as $\|\delta\|_2$ is small. In fact
$$
\|X\|_{p_c} \leq  \left( \frac 2 p \| \ell^{1-p/2}\|_2\right)^{(p/p_c-1)/(p-1)}   \|\delta\|_2^{(1+(p-2)/p_c)/(p-1)}
$$
and it will be important below that $(1+(p-2)/p_c)/(p-1)>1$ for $p<2$. For $p=2$, this bound will not be sufficient, however, and a separate argument will be needed to cover this case.

We conclude that, for small enough $\|\delta\|_2$,
\begin{align}\nonumber
& \int \rho^p - \left( a^{-1} D(\rho,\rho) \right)^{\frac {d(p-1)}\lambda} = 1 + 2 \int \ell^{p/2} \delta + \int \delta^2   \\ \nonumber &- \biggl( 1 +    \frac{2\lambda }{d(p-1)} \int \ell^{p/2} \delta - 2 a^{-1} \int V_\ell \rho + \frac{p\lambda }{d(p-1)} \int \ell^{p-1} X \\ \nonumber & \quad + \frac 4 {a p^2} D(\ell^{1-p/2} \delta,\ell^{1-p/2} \delta) + 2 a^{-1} D (X, 2p^{-1} \ell^{1-p/2}\delta ) + a^{-1} D(X,X) \biggl)^{\frac {d(p-1)}\lambda}
\\ & =   \int \delta^2    - p \int \ell^{p-1} X - \frac {d(p-1)}\lambda \frac 4 {a p^2} D(\ell^{1-p/2} \delta,\ell^{1-p/2} \delta)   \nonumber \\ & \quad -  2 \left(1 - \frac \lambda {d(p-1)}\right)\left(  \int \ell^{p/2} \delta \right)^2 - \mathcal{E}_1 \label{lastl}
\end{align}
with an error term $\mathcal{E}_1$ satisfying
$$
\mathcal{E}_1 \leq C \left(  \| \delta\|_2^3 + \|\delta\|_2 \|X\|_{p_c}  \right)-  \frac 1C  \int V_\ell \rho
$$
for suitable $C>0$. Note that
$$
\delta^2  -p \ell^{p-1}  X = \rho^{p} +  (p-1) \ell^p  - p\ell^{p-1} \rho = \delta^2 F_p (1+\delta/\ell^{p/2})
$$
where
$$
F_p(x) =  \frac{x^2 - 1  + p -p x^{2/p} }{(1-x)^2}.
$$
For $p\leq 2$, $F_p$ is an increasing function of $x\geq 0$. %, while for $p\geq 2$ it is decreasing.
It satisfies $F_p(0) = p-1$, $F_p(1)=2(p-1)/p$ and $\lim_{x\to \infty} F_p(x) =1$.

In the following we shall assume that $p<2$.
Pick a $0<\omega<1$ and let $\Omega$ denote the set where $\delta  < - \omega \ell^{p/2}$, which is a subset of the support of $\ell$. Recall the HLS exponent $ p_c=2/(2-\lambda/d)$. By H\"older
$$
\| \ell^{1-p/2} \delta \chi_\Omega\|_{p_c}  \leq \| \delta\|_2 \| \ell^{1-p/2} \chi_\Omega\|_{2/(1-\lambda/d)} \leq \| \delta\|_2 \| \ell^{1-p/2} \chi_\Omega\|_{p/(1-p/2)} |\Omega|^{1/p_c - 1/p}.  %\leq \| \delta\|_2 \|\ell\|_\infty |\Omega|^{1/3}
$$
Moreover,
$$
\|\delta\|_2^2 \geq \int_\Omega \delta^2 \geq \omega^2 \int_\Omega \ell^{p} = \omega^2  \| \ell^{1-p/2} \chi_\Omega\|_{p/(1-p/2)}^{p/(1-p/2)}
$$
and hence, in combination,
\begin{equation}\label{com1}
\| \ell^{1-p/2} \delta \chi_\Omega\|_{p_c}  \leq \| \delta\|_2^{2/p} \omega^{1-2/p}   |\Omega|^{1/p_c - 1/p}.
\end{equation}
This will allow us to replace $D (\ell^{1-p/2}\delta,  \ell^{1-p/2}\delta)$ by $D (\ell^{1-p/2}\delta \chi_{\Omega^c},  \ell^{1-p/2}\delta\chi_{\Omega^c})$, using HLS.

Let $P$ denote the orthogonal projection onto the $(d+2)$-dimensional space spanned by  $\{\ell^{1-p/2}, \ell^{(p-1)/2} \nabla \sqrt\ell, \ell^{(p-1)/2} \varphi\}$, and let  $P^\perp= 1-P$.
In order to be able to apply \eqref{gap2} to $\delta \chi_{\Omega^c}$, we want to first replace it  by $P^\perp \delta \chi_{\Omega^c}$. For this purpose, we estimate
\begin{equation}\label{com2a}
\| \ell^{1-p/2} P \delta \chi_{\Omega^c} \|_{p_c} \leq \| \ell^{1-p/2}\|_{\frac2{1-\lambda/d}} \| P \delta \chi_{\Omega^c} \|_{2} \leq\| \ell^{1-p/2}\|_{\frac2{1-\lambda/d}} \left(  \| P \delta \|_{2}+ \| P \delta \chi_{\Omega} \|_{2}\right).
\end{equation}
By construction, $\delta \perp \{ \ell^{(p-1)/2} \nabla \sqrt\ell, \ell^{(p-1)/2}\varphi\}$, hence
% \{ \ell^{1/3} \nabla \ell , \ell^{1/3} \varphi\}$, hence
$$
\| P \delta \|_2  =  \|\ell^{1-p/2}\|_2^{-1} \left|  \int \ell^{1-p/2} \delta \right| =  \frac p2  \|\ell^{1-p/2}\|_2^{-1}   \int X.
$$

In the following, we shall need the fact that the functions in the range of $P$ are all bounded by (a constant times) $\ell^{1-p/2}$. This is equivalent to $\ell^{p-1}$ being a Lipschitz function, which is shown in~\cite[Prop.~2.12]{ChaGonHuaMaiVol-20}.
We can thus bound
\begin{equation}\label{com2b}
\| P \delta \chi_{\Omega} \|_{2} \leq C \| \ell^{1-p/2}\chi_\Omega\|_2 \|\delta\|_2 \leq C \omega^{1-2/p} \|\delta\|_2^{2/p}
\end{equation}
similarly as above.

The first two terms on the right side of \eqref{lastl} are bounded from below as
$$
\int\left( \delta^2  - p \ell^{p-1}  X \right) \geq  F_p(1- \omega) \int_{\Omega^c}  \delta^2 + F_p(0) \int_\Omega \delta^2.
$$
For the next two terms, we have, on the one hand,
\begin{align*}
& \frac {d(p-1)}\lambda \frac 4 {a p^2} D(\ell^{1-p/2} P^\perp \delta \chi_{\Omega^c},\ell^{1-p/2} P^\perp \delta \chi_{\Omega^c})  +  2 \left(1 - \frac \lambda {d(p-1)}\right)   \langle  \ell^{p/2} | P^\perp \delta \chi_{\Omega^c}\rangle^2
\\ & \leq  \left(  2\frac {p-1}{ p}   -  \frac {d(p-1)}{\lambda a p^2}\kappa \right) \| P^\perp \delta \chi_{\Omega^c} \|_2^2
 \end{align*}
because of \eqref{gap2}. On the other hand, the terms involving $P \delta \chi_{\Omega^c}$ and $\delta \chi_{\Omega}$, respectively, can be bounded using HLS with \eqref{com1} and \eqref{com2a}--\eqref{com2b}.
We conclude that
\begin{align}\nonumber
& \int \rho^p - \left( a^{-1} D(\rho,\rho) \right)^{\frac {d(p-1)}\lambda} \\ &\geq  \left( F_p(1- \omega)  -  F_p(1)  +  \frac {d(p-1)}{\lambda a p^2}\kappa \right)  \int_{\Omega^c}  \delta^2 + F_p(0) \int_\Omega \delta^2- \mathcal{E}_1- \mathcal{E}_2 \label{lastl2}
\end{align}
with
$$
\mathcal{E}_2 \leq C\left(  \omega^{2-4/p} \|\delta\|_2^{4/p} +  \omega^{1-2/p} \|\delta\|_2^{1+2/p}+ \|\delta\|_2 \int X %+  \left(\int X\right)^2
 \right).
$$
Choosing $\omega$ small enough, this is of the desired form (for $p<2$), except for the error term involving $\int X$ in $\mathcal{E}_2$.
In order to bound it, it will be important that we have the negative term $ - \int V_\ell X =  - \int V_\ell \rho$ from $\mathcal{E}_1$ at our disposal.

As already noted above, $0\leq X \leq |\delta|^{2/p}$ for $p\leq 2$. Pick a ball $B$ that is strictly larger than the support of $\ell$. We bound
$$
\int X \leq \int_B |\delta|^{2/p} + \int_{B^c} \rho \leq \|\delta\|_2^{2/p} |B|^{1-1/p} + \int_{B^c} \rho.
$$
Moreover,
$$
\int V_\ell \rho \geq C_B \int_{B^c} \rho
$$
for a suitable constant $C_B$ depending on the radius of $B$. We thus conclude that if $\|\delta\|_2$ and $\int_{B^c} \rho$ are suitably small, then \eqref{tos} holds.

It is in fact enough to assume that $\|\delta\|_2$ is small enough. Due to the normalization condition $\int \rho = \int \ell = 1$ we have
$$
\int |\sqrt\rho - \sqrt\ell|^2 = 2 \int \sqrt\ell (\sqrt\ell - \sqrt\rho) \leq 2 \|\sqrt\ell -\sqrt\rho \|_{2p} \| \ell \|_{p/(2p-1)}^{1/2}.
$$
Note also that
$$
\int |\sqrt\ell - \sqrt\rho |^{2p} \leq \int | \ell^{p/2} - \rho^{p/2}|^2.
$$
In particular, $\int_{B^c}\rho $ is small if $\|\delta\|_2$ is. We conclude that as long as $\|\delta\|_2$ is small enough, \eqref{tos} holds. Note that this assumption on $\delta$ also implies $\|\delta\|_2^2 < \int \rho^p$, as we assumed in the beginning to guarantee that the infimum in \eqref{tos} is attained.

It remains to consider the case $p=2$. Since $F_2$ equals the constant function $1$,  there is no need for the cutoff $\omega$ in this case, and we can apply \eqref{gap2} directly to the function $P^\perp \delta$, with the result that
$$
 \int \rho^2 - \left( a^{-1} D(\rho,\rho) \right)^{\frac {d}\lambda}
\geq    \frac {d}{4 \lambda a}\kappa   \int  \delta^2 - \mathcal{E}_1-  C    \|\delta\|_2 \int X.
$$
As already mentioned, the bounds above on $\|X\|_{p_c}$ and $\int X$ are not good enough to conclude that the error terms are small compared to $\|\delta\|_2^2$. For $p=2$, we can use the fact that $X$ vanishes identically on the support of $\ell$, and equals $\rho = \delta$ outside the support. By splitting the integral into a ball slightly larger than the support, and its complement, we can bound
$$
\int X \leq   C \sqrt\epsilon \|\delta\|_2 +  \frac 1 \epsilon \int V_\ell \rho
$$
for any $\epsilon>0$,
where we used that $V$ increases linearly outside the support of $\ell$. Moreover, by H\"older
$$
\| X\|_{p_c} \leq \| X\|_2^{2(1-1/p_c)} \|X\|_1^{2/p_c-1}  \leq \|\delta\|_2^{2(1-1/p_c)} \left( \int X \right)^{2/p_c-1}.
$$
In combination, this proves the desired bound for an appropriate choice of $\epsilon$. That is, \eqref{tos} also holds in the case $p=2$ as long as $\|\delta\|_2$ is small enough.

The case when $\|\delta\|_2$ is not small is classical~\cite{BiaEgn-91,Frank-13,CarFraLie-14} and the argument goes by contradiction. Let us assume that there exists a sequence $\rho_n\geq0$ with $\int\rho_n=1=\int\rho_n^p$ such that
\begin{equation}
\inf_{\kappa,y} \int ( \rho_n^{p/2} -\ell_{\kappa,y}^{p/2})^{2} \geq \frac12 \int \rho_n^{p}=\frac12
 \label{eq:delta_large}
\end{equation}
and
\begin{equation}
\frac{ \|\rho_n\|_p^{p} - \left( a^{-1} D(\rho_n,\rho_n) \right)^\frac{d(p-1)} {\lambda}}{\inf_{\kappa,y}   \| \rho_n^{ \frac p2} - \ell_{\kappa,y}^{ \frac p2}\|_{2}^2}\to0.
\end{equation}
Since the denominator is bounded, this implies that $\rho_n$ is an optimizing sequence for the Lane-Emden inequality~\eqref{eq:Lane-Emden}. Such sequences are known to be compact in $L^p(\R^d)$ up to translations by~\cite[Thm. II.1, Cor. II.1]{Lions-84} and to converge after extraction to a Lane-Emden optimizer. This contradicts~\eqref{eq:delta_large} and concludes the proof of Theorem~\ref{thm:main}.\qed

%%%%%%%%%%%%%%%%%%%%%%%%%%%%%%%%%%%%%%%%%%%%%%%%%%%%%%%%%%%
%%%%%%%%%%%%%%%%%%%%%%%%%%%%%%%%%%%%%%%%%%%%%%%%%%%%%%%%%%%
\appendix
\section{Proof of Theorem~\ref{thm:scattering}}\label{appendix}
%%%%%%%%%%%%%%%%%%%%%%%%%%%%%%%%%%%%%%%%%%%%%%%%%%%%%%%%%%%
%%%%%%%%%%%%%%%%%%%%%%%%%%%%%%%%%%%%%%%%%%%%%%%%%%%%%%%%%%%
In this appendix we give the proof of Theorem~\ref{thm:scattering}, following closely~\cite{FraLen-13,FraLenSil-16}. After taking the real and imaginary parts we can assume that $f$ is real.

\subsection{Local case $s=1$}
When $s=1$ the equation can be written in radial coordinates as
$$-f''-\frac{d-1}{r}f'+Vf=\tau V.$$
For radial functions, we use everywhere the simplified notation $f(r):=f(re_1)$ with $e_1=(1,0,...,0)$. Next we consider the local Hamiltonian
\begin{equation}
H(r)=\frac{f'(r)^2}2-V(r)\frac{(\tau-f(r))^2}{2},
    \label{eq:local_Ham}
\end{equation}
which satisfies
$$H'(r)=-\frac{(d-1)}r f'(r)^2-V'(r)\frac{(\tau-f(r))^2}{2}\leq0.$$
Here $V'$ is understood as a non-negative measure over $\R_+$. Since $H$ is non-increasing and $f$ is radial, we must have
$$H(0)=-V(0)\frac{(\tau-f(0))^2}2\geq H(+\infty)=0.$$
Hence, if $f(0)=\tau$, we conclude that $H\equiv0$ and thus $f'\equiv0$. Since $f\to0$ at infinity, we deduce $f\equiv0$ and therefore $\tau=f(0)=0$, as was claimed.

 \begin{remark}
Using ODE techniques, one can show the validity of Theorem~\ref{thm:scattering} in the local case $s=1$ also without the monotonicity assumption on $V$. It remains an open problem whether such an extension is also possible for $s<1$.
 \end{remark}

\subsection{Non-local case $0<s<1$}
Here we employ the techniques and results in \cite{FraLenSil-16}. First, it follows from \cite[Prop.~2.9]{Silvestre-06} that $f \in C^{0,\alpha}(\R^d)$ for $\alpha < 2s\leq 1$, and even $f \in C^{1,\alpha}(\R^d)$ for $\alpha < 2s- 1$ in case $s>1/2$. In particular, $f \in C^{0,\alpha}(\R^d)$ for some $s< \alpha <1$.

Let now $u$ be the  $s$-harmonic extension of $f$ to $\R^d \times \R_+$, that is,
\begin{equation}\label{def:u}
u(x,t) = \int_{\R^d} P_s(x-y,t) f(y)\, \dy,
\end{equation}
where
$$
P_s(x,t) = c_{n,s} \frac{ t^{2s} }{ \left( t^2 + |x|^2 \right)^{n/2+s}}
$$
and $c_{n,s}>0$ is chosen such that $\int_{\R^d} P_s(x,t) \,\dx = 1$ for all $t>0$. As a first step we prove the following

\begin{lemma}The integrals $\int_0^\infty t^{1-2s} |\nabla_x u(x,t)|^2\dt$ and $\int_0^\infty t^{1-2s}  |\partial_t u(x,t)|^2 \dt$ are finite, continuous in $x$ and tend to 0 at infinity.
\end{lemma}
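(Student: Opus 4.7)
My plan is to deduce all three assertions---finiteness, continuity in $x$, and vanishing at infinity---from a single uniform pointwise upper bound on the gradient of $u$, combined with two applications of the dominated convergence theorem.

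First I would establish the uniform pointwise estimate
\begin{equation*}
|\nabla_x u(x,t)|+|\partial_t u(x,t)| \leq C\,\min(t^{\alpha-1},t^{-1}),\qquad (x,t)\in\R^d\times(0,\infty),
\end{equation*}
where $\alpha\in(s,1)$ is the H\"older exponent of $f$ provided by Silvestre's regularity result cited just before the lemma. Differentiating~\eqref{def:u} and exploiting the cancellation $\int_{\R^d}\nabla_x P_s(x-y,t)\,\mathrm{d}y=0$, with the analogous identity for $\partial_t P_s$, one rewrites
\begin{equation*}
\nabla_x u(x,t)=\int_{\R^d}\nabla_x P_s(x-y,t)\bigl(f(y)-f(x)\bigr)\,\mathrm{d}y,
\end{equation*}
so the integrand is controlled by $|\nabla_x P_s(x-y,t)|\cdot\min(2\|f\|_\infty,[f]_{C^{0,\alpha}}|x-y|^\alpha)$. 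The explicit form of $P_s$ and the change of variable $y=x+tw$ then give the $t^{\alpha-1}$ behaviour as $t\to 0^+$ (using the H\"older factor) and the $t^{-1}$ behaviour as $t\to\infty$ (using the $L^\infty$ factor), uniformly in $x$; the treatment of $\partial_t u$ is identical.

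Second, squaring this bound and multiplying by $t^{1-2s}$ produces the integrable majorant $C\min(t^{2\alpha-2s-1},t^{-1-2s})$ on $(0,\infty)$, whose integrability at $t=0$ is precisely where the hypothesis $\alpha>s$ is used. This gives finiteness of both integrals with a bound uniform in $x$. Continuity in $x$ then follows from dominated convergence, since $\nabla_x u(\cdot,t)$ and $\partial_t u(\cdot,t)$ are continuous in $x$ for every fixed $t>0$ (the Poisson kernel being smooth away from $t=0$ and $f$ being bounded).

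Finally, for the vanishing at infinity I would observe that for each fixed $t>0$,
\begin{equation*}
\nabla_x u(\cdot,t)=(\nabla_x P_s)(\cdot,t)\ast f,\qquad \partial_t u(\cdot,t)=(\partial_t P_s)(\cdot,t)\ast f,
\end{equation*}
are convolutions of an $L^1(\R^d)$ kernel with the bounded function $f$ that vanishes at infinity, and hence themselves vanish at infinity (by the standard splitting of the convolution into near- and far-field parts). A second application of dominated convergence in $t$, with the same $t$-integrable majorant, then yields that both $t$-integrals tend to $0$ as $|x|\to\infty$. The main technical point is the pointwise estimate in the first step, and specifically the observation that the exponent $\alpha>s$ coming from Silvestre's theorem is exactly what is required for integrability at the origin; everything else is a routine double application of dominated convergence.
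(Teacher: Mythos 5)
Your proof is correct and follows essentially the same route as the paper: the cancellation identity giving the two pointwise bounds $Ct^{-1}$ (from $\|f\|_\infty$) and $Ct^{\alpha-1}$ (from the H\"older seminorm with $s<\alpha<2s$), the resulting majorant $C\min(t^{2\alpha-2s-1},t^{-1-2s})$, and dominated convergence for continuity and decay at infinity. The only cosmetic difference is that you spell out the standard convolution argument for why $\nabla_x u(\cdot,t)$ and $\partial_t u(\cdot,t)$ vanish at infinity for fixed $t$, which the paper leaves implicit.
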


\begin{proof}
Note that 
\begin{equation}\label{eq:tu}
\partial_t u(x,t) = \int_{\R^d} \partial_t P_s(x-y,t) f(y) \,\dy
\end{equation}
and hence
$$
\left| \partial_t u(x,t) \right| \leq \|f \|_\infty  \int_{\R^d} \left| \partial_t P_s(y,t) \right|  \,\dy = C t^{-1} \|f\|_\infty.
$$
Moreover, we also have 
$$
\partial_t u(x,t) = \int_{\R^d} \partial_t P_s(x-y,t) \left( f(y)  - f(x) \right)\dy
$$
and hence
$$
\left| \partial_t u(x,t) \right| \leq \|f \|_\alpha  \int_{\R^d} \left| \partial_t P_s(y,t)\right| |y|^\alpha  \dy = C t^{-1+\alpha} \|f\|_\alpha,
$$
since $\alpha < 2s$, as explained before. Here we introduced as in~\cite[Sect.~4]{Stein-70}
$$
\|f\|_\alpha := \sup_{x,y\in\R^n} \frac{ |f(x)-f(y)|}{|x-y|^\alpha}.
$$
From these bounds, it readily follows that $\int_0^\infty t^{1-2s} | \partial_t u(x,t) |^2 \dt$ is uniformly bounded. By dominated convergence, it is continuous and goes to zero at infinity, since $\partial_t u$ does (for every fixed $t>0$, which follows from \eqref{eq:tu}). 

For $\nabla_x u(x,t)$ we can argue very similarly. We have 
\begin{equation}\label{eq:xu}
\nabla_x u(x,t) = \int_{\R^d} \nabla_x P_s(x-y,t) f(y) \,\dy
\end{equation}
and hence
$$
\left| \nabla_x u(x,t) \right| \leq \|f \|_\infty  \int_{\R^d} \left| \nabla_y P_s(y,t) \right|  \dy = C t^{-1} \|f\|_\infty.
$$
Moreover, 
$$
\nabla_x u(x,t) = \int_{\R^d} \left( \nabla_x P_s(x-y,t) \right) \left( f(y)  - f(x) \right) \dy
$$
and hence
$$
\left| \nabla_x u(x,t) \right| \leq \|f \|_\alpha  \int_{\R^d} \left| \nabla_y P_s(y,t)\right| |y|^\alpha   \dy = C t^{-1+\alpha} \|f\|_\alpha.
$$
Again we conclude that $\int_0^\infty t^{1-2s} | \nabla_x u(x,t) |^2 \dt$ is uniformly bounded, continuous and  goes to zero at infinity. Moreover, note that if $f$ is radial, then this expression vanishes at $x=0$, since $\nabla_x u(x,t)$ does for every $t>0$.   
\end{proof}

Next we define the function
\begin{equation}
 H(x) = d_s\int_0^\infty t^{1-2s} \left( |\nabla_x u(x,t)|^2 - |\partial_t u(x,t)|^2 \right) \dt  - V(x) |f(x)-\tau|^2,
 \label{eq:H_fractional}
\end{equation}
where $d_s = 2^{2s-1}\Gamma(s)/\Gamma(1-s)$.
This is the equivalent of the local Hamiltonian~\eqref{eq:local_Ham}. It is bounded, tends to 0 at infinity, and satisfies the following 

\begin{proposition}\label{prop:H_fractional_monotone}
Under the assumptions of Theorem~\ref{thm:scattering}, $x\mapsto H(x)$ in~\eqref{eq:H_fractional} is non-increasing in $r=|x|$. 
\end{proposition}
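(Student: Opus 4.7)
The plan is to carry out a Pohozaev-type monotonicity computation on the Caffarelli--Silvestre extension $u(x,t)$ of $f$, adapting the template of~\cite{FraLenSil-16} to the inhomogeneous equation $(-\Delta)^s f + Vf = \tau V$. Since $f$ is radial in $x$, so is $u(\cdot,t)$ for every fixed $t>0$; in particular $|\nabla_x u|^2 = (\partial_r u)^2$, and $u$ solves the weighted elliptic equation
\begin{equation*}
\partial_r^2 u + \frac{d-1}{r}\partial_r u + \frac{1-2s}{t}\partial_t u + \partial_t^2 u = 0 \qquad\text{on }\R^d\times(0,\infty),
\end{equation*}
together with the boundary identity $\lim_{t\to 0^+} t^{1-2s}\partial_t u(x,t) = d_s^{-1} V(x)(f(x)-\tau)$, obtained by combining the extension formula with $(-\Delta)^s f = V(\tau-f)$.

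The main step is to differentiate $K(r):=\int_0^\infty t^{1-2s}\bigl((\partial_r u)^2-(\partial_t u)^2\bigr)\,\dt$ in $r$ inside the integral, substitute $\partial_r^2 u$ from the extension equation, and use $\partial_r u\,\partial_t^2 u = \partial_t(\partial_r u\,\partial_t u) - \partial_t u\,\partial_r\partial_t u$ to obtain the pointwise identity
\begin{equation*}
\partial_r\bigl((\partial_r u)^2-(\partial_t u)^2\bigr) = -\frac{2(d-1)}{r}(\partial_r u)^2 - \frac{2(1-2s)}{t}\partial_r u\,\partial_t u - 2\partial_t\bigl(\partial_r u\,\partial_t u\bigr).
\end{equation*}
I then multiply this by $t^{1-2s}$, integrate over $t\in(0,\infty)$, and integrate the last term by parts in $t$: the boundary contribution at $t=\infty$ vanishes by the decay estimates in the preceding lemma, the interior contribution from the integration by parts exactly cancels the $(1-2s)/t$ cross term, and the boundary contribution at $t=0^+$ evaluates via the Caffarelli--Silvestre relation to $d_s^{-1} V\,\partial_r|f-\tau|^2$. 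Multiplying through by $d_s$ and subtracting $\partial_r(V|f-\tau|^2) = V'|f-\tau|^2 + V\partial_r|f-\tau|^2$, I arrive at
\begin{equation*}
\partial_r H(r) = -\frac{2(d-1)\,d_s}{r}\int_0^\infty t^{1-2s}(\partial_r u)^2\,\dt \;-\; V'(r)\,|f-\tau|^2 \;\leq\; 0,
\end{equation*}
since $d\geq 1$ and the hypothesis that $V$ is radially non-decreasing forces $V'\geq 0$ as a non-negative measure on $\R_+$.

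The main technical obstacle will be the rigorous extraction of the boundary contribution at $t=0^+$, because the Caffarelli--Silvestre datum is only a weighted limit of $\partial_t u$ and $u$ is not a priori smooth up to $t=0$. The standard remedy, as in~\cite{FraLenSil-16}, is to perform the computation first on $\int_\epsilon^\infty$ in place of $\int_0^\infty$, so that all pointwise manipulations live in the smooth interior of the half-space, and then pass to the limit $\epsilon\to 0^+$ using the H\"older regularity of $f$ recalled from~\cite{Silvestre-06} together with the uniform weighted $L^2$ control on $\nabla_x u$ and $\partial_t u$ established in the preceding lemma.
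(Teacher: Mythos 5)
Your formal Pohozaev computation on the Caffarelli--Silvestre extension is exactly the skeleton of the paper's argument (and of \cite[Lemma~4.1]{FraLenSil-16}), but the way you propose to make it rigorous --- truncate at $t=\eps$ and pass to the limit using the H\"older bound on $f$ and the weighted $L^2$ control --- does not address the actual obstruction, which is not the $t\to0^+$ boundary per se but the low regularity of $V$ and $f$ in the application. Under the hypotheses of Theorem~\ref{thm:scattering}, $V$ is merely bounded, radial and non-decreasing (for $p=2$ it is a multiple of a characteristic function), and $f$ is only $C^{0,\alpha}$ with $\alpha<2s$, hence in general not differentiable when $s\le 1/2$. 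Consequently your boundary term $d_s^{-1}V\,\partial_r|f-\tau|^2$ is not defined pointwise: the weighted limit $t^{1-2s}\partial_t u\to d_s^{-1}V(f-\tau)$ need not hold pointwise where $Vf$ jumps, $\partial_r u(x,t)$ can blow up like $t^{\alpha-1}$ as $t\to0$, and $\partial_r f$ need not exist. Worse, the final step, subtracting $\partial_r\bigl(V|f-\tau|^2\bigr)=V'|f-\tau|^2+V\,\partial_r|f-\tau|^2$, uses a Leibniz rule in which the product of the BV (possibly jump) function $V$ with the distribution $\partial_r|f-\tau|^2$ has no canonical meaning precisely where both are singular. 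This is the point the paper flags: the FLS argument is only valid under stronger regularity, and a genuinely new device is needed.

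The paper's remedy, which is absent from your proposal, is to avoid any pointwise differentiation of $H$: one averages $H$ in $r$ against a test function $\eta$, mollifies $f$ to $f_\eps=j^\eps*f$ (with $s$-harmonic extension $u_\eps$), runs your computation on the smooth objects, and then passes $\eps\to0$. The price is that $f_\eps$ does not solve the mollified equation --- one picks up the commutator term $-2\,\partial_\lambda f_\eps\bigl(V_\eps f_\eps-(Vf)_\eps\bigr)$, which your formal derivation never sees. Showing that this term vanishes in the limit is the heart of the proof: it uses the monotonicity of $V$ to write $V(y)-V(z)$ as the integral of a non-negative measure $\rd V$, the H\"older bound $\|f\|_\alpha$, and the geometric estimate $|B_t(0)\cap B_t^{\rm c}(z)|\le C|z|t^{n-1}$, giving a contribution of order $\eps^{2\alpha}\to0$ on compact sets. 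Without this (or an equivalent regularization-and-commutator argument), the identity you write for $\partial_r H$ cannot be justified under the stated hypotheses, so as it stands the proposal has a genuine gap.
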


Proposition~\ref{prop:H_fractional_monotone} was proved in~\cite[Sec.~4.2]{FraLenSil-16} in the case  $\tau=0$ under stronger regularity assumptions on $f$ and $V$. These assumptions are not always fulfilled in our application, however. E.g., for $p=2$ in our application $V$ is a characteristic function. Hence we cannot directly apply the result in~\cite[Sec.~4.2]{FraLenSil-16} and develop a new argument that does not need this additional regularity.

\begin{proof}
We pick a non-negative $\eta \in C_c^\infty(\R_+)$  and define an averaged function
$$
H_{\rm av}(r) = \int_0^\infty H(\lambda)\, \eta(\lambda /r) \frac{\rd\lambda }{r}.
$$
Recall that $H(\lambda)$ means $H(\lambda e_1)$.
The function $H_{\rm av}$ is clearly differentiable. The analogue of \cite[Lemma~4.1]{FraLenSil-16} is the statement
\begin{equation}\label{eq:havp}
H_{\rm av}'(r) =  - \int_0^\infty \left( 2 d_s \frac{d-1}{\lambda}  \int_0^\infty t^{1-2s} |\partial_\lambda u(\lambda, t)|^2 dt + V'(\lambda) \left| f(\lambda) -\tau \right|^2 \right) \eta( \lambda/r) \frac{\rd\lambda}{r}.
\end{equation}
Here $V'$ has to be understood as a non-negative measure which is integrated against the mentioned continuous functions. In particular, $H_{\rm av}$ is non-increasing and, since this holds for any $\eta$, consequently also $H$ is non-increasing. 

In order to prove of \eqref{eq:havp}, we cannot proceed directly as in \cite[Lemma~4.1]{FraLenSil-16}, because of the lower regularity in our setting. Instead, we argue as follows. Let $j\in C_0^\infty(\R^n)$ be radial and non-negative, with $\int_{\R^d} j = 1$. For $\eps>0$, let 
$$
f_\eps(x) = \eps^{-d}  \int_{\R^d} j((x-y)/\eps) f(y)\, \dy
$$
and let $u_\eps$ be its $s$-harmonic extension. Let 
$$
H^\eps(x) = d_s\int_0^\infty t^{1-2s} \left( |\nabla_x u_\eps(x,t)|^2 - |\partial_t u_\eps(x,t)|^2 \right) \dt  - V_\eps(x) |f_\eps(x)-\tau|^2.
$$
One checks that $H^\eps(x)$ converges to $H(x)$ as $\eps\to 0$ for almost every $x$. By dominated convergence, we thus have
\begin{align*}
H_{\rm av}'(r) & =  - \int_0^\infty H(\lambda) \left(  \eta' (\lambda /r) \frac \lambda {r} + \eta(\lambda/r)  \right)  \frac{\rd\lambda }{r^2} 
\\ & =  - \lim_{\eps\to 0} \int_0^\infty H^\eps(\lambda) \left(  \eta' (\lambda /r) \frac \lambda {r} + \eta(\lambda/r)  \right)  \frac{\rd\lambda }{r^2}.
\end{align*}
Integration by parts thus yields
$$
H_{\rm av}'(r)   =   \lim_{\eps\to 0} \int_0^\infty \partial_\lambda H^\eps (\lambda) \eta (\lambda /r) \frac \lambda {r^2}\, \rd\lambda.
$$
To compute the derivative of $H^\eps$, we can now proceed as in \cite[Lemma~4.1]{FraLenSil-16}, with the result that
\begin{align*}
\partial_\lambda H^\eps (\lambda)&= - 2d_s  \frac{n-1}{\lambda} \int_0^\infty t^{1-2s} |\partial_\lambda u_\eps(\lambda,t)|^2 \dt  - V_\eps'(\lambda ) | f_\eps(\lambda) - \tau |^2 \\ & \quad - 2 \partial_\lambda f_\eps(\lambda) \Big( V_\eps(\lambda) f_\eps(\lambda) - ( V f)_\eps (\lambda) \Big).
\end{align*}
Now the first line converges to the desired expression (in the sense of measures) as $\eps\to 0$. We shall now argue that the second line converges to zero (when integrated against any bounded function of compact support), which completes the proof. 

For any $x\in\R^d$, we have
$$
V_\eps(x) f_\eps(x) - \left( V f \right)_\eps (x)  = \frac 12  \int_{\R^{2d}}  j^\eps(x-y) j^\eps(x-z) \left( V(y) - V(z) \right) \left( f(z) - f(y) \right) \dy\, \rd z,
$$
where we introduced $j^\eps(x)=\eps^{-d} j(x/\eps)$. Since $V$ is a radial and monotone function, 
$$
V(y) - V(z) = -\int_{|y|}^{|z|} \rd V
$$
for $|y|<|z|$, for some non-negative measure $\rd V$ on $\R_+$. Hence, using that 
$$
\nabla f_\eps(x) = \int_{\R^d} \nabla j^\eps(x-w) \left( f(w) - f(x) \right) \rd w , 
$$
we obtain
\begin{align*}
&\int | \nabla f_\eps| \left| V_\eps f_\eps - \left(Vf\right)_\eps \right| 
\\ & \leq \int_{\R^{2d}} \dx \, \rd w \int_{|y|<|z|} \dy\, \rd z \int_{|y|}^{|z|} \rd V \, j^\eps(x-y) j^\eps(x-z) |\nabla j^\eps(x-w)|\times\\
&\qquad\qquad\times \left| f(z) - f(y)\right| \left| f(w) - f(x) \right| 
\\ & \leq \|f\|_\alpha^2 \int_{\R^{2d}} \!\!\dx \, \rd w \int_{|y|<|z|} \!\!\dy\, \rd z \int_{|y|}^{|z|}\!\! \rd V \, j^\eps(x-y) j^\eps(x-z) |\nabla j^\eps(x-w)| \left| z -y\right|^\alpha \left| w - x \right|^\alpha 
\\ & = \eps^{\alpha -1} \| |\cdot|^\alpha \nabla j\|_1  \|f\|_\alpha^2   \int_{|y|<|z|}  \dy\, \rd z \int_{|y|}^{|z|}\!\! \rd V \, j^\eps * j^\eps(z-y)  \left| z -y\right|^\alpha 
\\ & = \eps^{\alpha -1} \| |\cdot|^\alpha \nabla j\|_1  \|f\|_\alpha^2  \int_{0}^\infty \rd V(t)  \int_{\R^d}  \rd z  \, j^\eps * j^\eps(z) |z|^\alpha \left| B_t(0) \cap B_t^{\rm c}(z) \right|.
\end{align*}
Since $\left| B_t(0) \cap B_t^{\rm c}(z) \right| \leq C |z| t^{n-1}$, the expression above is of order $\eps^{2\alpha}$ as long as  $\int_0^\infty \rd V(t)\, t^{n-1}$ is finite.
If we only integrate over a compact set, say a centered ball, $|x|$ stays bounded, and hence so do $|y|$ and $|z|$ because of the compact support of $j$. It is thus enough to integrate $dV(t) t^{n-1}$ over a compact interval, which is always finite (since $\int_0^\infty \rd V = \|V\|_\infty < \infty$). We have thus proved \eqref{eq:havp}. In particular, $H_{\rm av}$ is non-increasing, and since this holds for any choice of $\eta$, also $H$ is non-increasing. 
\end{proof}

With Proposition~\ref{prop:H_fractional_monotone} at hand, the end of the proof of Theorem~\ref{thm:scattering} goes as in the local case above. If $f(0) = \tau$, then
\begin{equation}\label{h0}
 H(0) = - d_s\int_0^\infty t^{1-2s} |\partial_t u(0,t)|^2\,  \dt  \leq 0
\end{equation}
and since $H$ is a decreasing function that vanishes as infinity, this implies $H\equiv0$. In particular, we conclude from \eqref{eq:havp} and \eqref{h0} that
\begin{equation}\label{u0t}
\int_0^\infty t^{1-2s} |\partial_t u(0,t)|^2  \,\dt = 0,
\end{equation}
that $f\equiv \tau$ on the support of $V'$, and also that 
$$
\int_0^\infty t^{1-2s} |\nabla_x u(x,t)|^2  \dt = 0
$$
for all $x\in \R^d$ for $d\geq 2$. For $d\geq 2$ one thus immediately concludes that $\nabla_x u\equiv 0$, hence $u\equiv0$ and $f\equiv0$ (since $f$ was assumed to vanish at infinity). For $d=1$, we can proceed exactly as in the proof of \cite[Thm.~2.1]{FraLenSil-16}, using \eqref{u0t}; we will not repeat the details here. 

Finally, if there exists a solution $f$ with $\tau\neq0$ then there cannot exist a non-trivial solution $g$ with $\tau=0$, tending to 0 at infinity. We have quickly explained the argument after the statement of Theorem~\ref{thm:scattering}. 

\section*{Acknowledgement}
We are grateful to Rupert Frank and Enno Lenzmann for helpful discussions.

 %%%%%%%%%%%%%%%%%%%%%%%%%%%%%%%%%%%%%%%%%%
% \bibliographystyle{my-alpha}
% \bibliography{biblio_LE}

\newcommand{\etalchar}[1]{$^{#1}$}

\end{document}